\def\Re{\mathbb R}
\def\epsilon{{\varepsilon}}
\providecommand{\remove}[1]{}
\theoremstyle{plain}
\newtheorem{theorem}{Theorem}[section]
\newtheorem{lemma}[theorem]{Lemma}
\newtheorem{proposition}[theorem]{Proposition}
\newtheorem{problem}[theorem]{Problem}
\theoremstyle{definition}
\newtheorem{definition}[theorem]{Definition}
\theoremstyle{remark}
\newtheorem{remark}[theorem]{Remark}
\newcommand{\calC}{\mathcal{C}}
\newcommand{\D}{\mathcal{D}}
\newcommand{\cardin}[1]{\lvert {#1} \rvert}
\def\marrow{{\marginpar[\hfill$\longrightarrow$]{$\longleftarrow$}}}
\def\shakhar#1{{\sc Shakhar says: }{\marrow\sf #1}}
\begin{document}
	
\title{Extended VC-dimension, and  Radon and Tverberg type theorems for unions of convex sets}
\author{Noga Alon\thanks{Department of Mathematics, 
Princeton University, Princeton, NJ 08544, USA and
Departments of Mathematics and Computer Science, Tel Aviv University,
Tel Aviv 69978, Israel.
Research supported in part by NSF grant DMS-2154082.
\texttt{nalon@math.princeton.edu}} 
\and Shakhar Smorodinsky\thanks{Institute for the Theory of Computing, Faculty of Computer and Information Science,
 Ben-Gurion University of the NEGEV, Be'er Sheva 84105, Israel.
Research partially supported by the Israel Science 
Foundation (grant no.~1065/20).  \texttt{shakhar@bgu.ac.il}}\thanks{A preliminary version of this paper appeared in Proc. of the ACM-SIAM Symposium on Discrete Algorithms (SODA 2026)}}
	
	\date{}
	\maketitle

\begin{abstract}
  We prove a new Radon type theorem for unions of convex sets,
settling an open problem posed by Kalai in the 1970s. We also define 
	and study an extension of the notion of the VC-dimension of a 
	hypergraph and apply it to establish an extension of our 
	Radon type theorem to a Tverberg type theorem 
	for unions of convex sets. 
\end{abstract}
		
	
\section{Introduction} \label{sec:intro}
Radon's theorem states that any set of $d+2$ points in $\Re^d$ can
be partitioned into two subsets whose convex hulls intersect. Formally,
given a set $P = \{x_1, x_2, \ldots, x_{d+2}\} \subseteq \mathbb{R}^d$,
there exists a partition of $P$ into two disjoint subsets $P_1$ and $P_2$
($P= P_1 \cup P_2$) such that  $\text{CH}(P_1) \cap \text{CH}(P_2)
\neq \emptyset$, where here and in what follows $\text{CH}(X)$ denotes the convex hull of a set of points $X$. The bound $d+2$ is tight as one 
can easily see by taking any set of $d+1$ affine-independent points 
in $\Re^d$, that is, the vertices of a $d$-dimensional simplex.

This fundamental theorem is a cornerstone in discrete
geometry, providing insights into the structure of point sets and their
convex combinations. It implies various fundamental theorems in geometry including the classical Helly's Theorem, the center point theorem, and more.
Its implications extend to machine learning, statistical learning and computational geometry,
influencing algorithms for geometric separation, point location, and
convex hull computations. An example of such implication is the existence of small so-called $\epsilon$-nets or $\epsilon$-approximations for range-spaces defined by semi-algebraic sets which are a core notion in those areas.
The combinatorial notion of VC-dimension is, in fact, the analog of Radon's bound for abstract set-systems.

An equivalent formulation of Radon's theorem
states that for any set $P$ of $d+2$ points in $\mathbb{R}^d$, there
exists a partition of the set into two subsets $P_1$ and $P_2$ such that
any convex set containing $P_1$ must intersect any convex set containing
$P_2$. 
Radon's theorem serves as a basis for numerous generalizations
and related results, such as the following classical and beautiful
theorem of Tverberg \cite{Tverberg1966}, which further enriches our
understanding of geometric configurations.

\begin{theorem}\label{Thm:Tverberg}(Tverberg's Theorem\cite{Tverberg1966})
Let $ r \geq 2$ be a fixed integer and $d \geq 1$. Then for any set $P$
of $(r-1)(d+1)+1$ points in $\Re^d$ there exists a partition of $P$
into $r$ pairwise disjoint sets $P = \bigcup_{i=1}^r P_i$ such that
$\bigcap_{i=1}^r \text{CH}(P_i) \neq \emptyset$.
\end{theorem}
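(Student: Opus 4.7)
The plan is to present Sarkaria's tensor-product proof, which reduces Tverberg's theorem to the Colored Carath\'eodory theorem of B\'ar\'any: if $S_1,\ldots,S_{k+1}$ are sets in $\mathbb{R}^k$ each containing the origin in their convex hull, then there is a transversal $x_i\in S_i$ with $0\in\CH(x_1,\ldots,x_{k+1})$. When $r=2$ the statement degenerates to Radon's theorem, which follows from the linear dependence of $d+2$ lifted vectors $(p_i,1)\in\mathbb{R}^{d+1}$; for general $r$ the tensor product encodes the $r$-partition requirement into one such dependence.

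Concretely, I fix vectors $v_1,\ldots,v_r\in\mathbb{R}^{r-1}$ with $\sum_{j=1}^{r}v_j=0$ and any $r-1$ of them linearly independent (for instance, the vertices of a regular simplex centered at the origin). For each input point $p_i$ write $\hat p_i=(p_i,1)\in\mathbb{R}^{d+1}$, and form the color class
\[
C_i \;=\; \{\hat p_i\otimes v_j : j=1,\ldots,r\} \;\subset\; \mathbb{R}^{(d+1)(r-1)}.
\]
Since $\sum_j v_j=0$, the origin lies in $\CH(C_i)$, and the number of color classes, $N=(r-1)(d+1)+1$, is exactly the ambient dimension plus one, which is the budget for Colored Carath\'eodory.

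Apply Colored Carath\'eodory to obtain an assignment $\pi\colon[N]\to[r]$ and nonnegative $\lambda_i$ of total mass $1$ with $\sum_i\lambda_i\,\hat p_i\otimes v_{\pi(i)}=0$. Setting $P_j:=\pi^{-1}(j)$ and grouping the sum by color, linear independence of $v_1,\ldots,v_{r-1}$ together with $v_r=-\sum_{j<r}v_j$ forces the partial sums $w_j:=\sum_{i\in P_j}\lambda_i\hat p_i$ to be the \emph{same} element of $\mathbb{R}^{d+1}$ for every $j$. A quick check rules out empty parts (an empty $P_j$ would make the common value $w_j$ vanish, hence every $\lambda_i$ would vanish, contradicting $\sum_i\lambda_i=1$). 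Reading the last coordinate of the common $w_j$ gives $\sum_{i\in P_j}\lambda_i=s$ for a fixed $s>0$; the first $d$ coordinates then exhibit $w_j/s$ as a convex combination of the points $\{p_i:i\in P_j\}$ that is common to all $j$, which is the desired Tverberg point.

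The main obstacle is concentrated in the Colored Carath\'eodory theorem itself, whose proof is a short exchange argument: start with any transversal, let $q$ be the point of its convex hull nearest the origin, and, whenever $q\neq 0$, swap one element of the transversal to strictly decrease $\|q\|$, driving $q$ to $0$ in finitely many steps. Granting that lemma, the tensor bookkeeping above is essentially the whole argument, and the bound $(r-1)(d+1)+1$ is forced simply by matching $N$ to $\dim\mathbb{R}^{(d+1)(r-1)}+1$.
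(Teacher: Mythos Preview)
The paper does not include a proof of this theorem; Tverberg's Theorem is quoted with a citation to \cite{Tverberg1966} as a classical background result, and no argument for it is given anywhere in the text. There is therefore no ``paper's proof'' to compare your proposal against.

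That said, your proof is correct: it is the standard Sarkaria tensor-product argument, reducing Tverberg to B\'ar\'any's Colored Carath\'eodory theorem in $\mathbb{R}^{(d+1)(r-1)}$. The bookkeeping you spell out---that $\sum_j w_j\otimes v_j=0$ together with the linear independence of any $r-1$ of the $v_j$ forces all the $w_j$ to coincide, and that an empty part would make the common value vanish and hence force every $\lambda_i=0$ in contradiction to $\sum_i\lambda_i=1$---is exactly right, and the bound $(r-1)(d+1)+1$ indeed drops out from matching the number of color classes to one more than the tensor dimension. The only cosmetic quibble is your sketch of Colored Carath\'eodory: the exchange process does not literally terminate ``in finitely many steps'' for arbitrary color classes; the clean phrasing is that, since your $C_i$ are finite, there are finitely many transversals, so one of them minimizes the distance from the origin to its convex hull, and if that minimum were positive a single swap would contradict minimality. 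With that adjustment the argument is complete.
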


Note that Radon's theorem is the special case of 
Tverberg's theorem with $r=2$.

Tverberg's Theorem has far-reaching implications in discrete and computational geometry and beyond.
Combined with the colorful Carath\'eodory's theorem of B{\'a}r{\' a}ny \cite{Barany82} it implies, for example, the so-called first selection Lemma (see, e.g., \cite{MATOUSEK}).

An equivalent formulation of Tverberg's theorem states that for any set
$P$ of $(r-1)(d+1)+1$ points in $\mathbb{R}^d$, there exists a partition
of the set into $r$ pairwise disjoint sets $P = \bigcup_{i=1}^r P_i$ such
that for any family of convex sets $C_1,\ldots,C_r$ with $P_i \subset C_i$
for every $i \in [r]$ we have that $\bigcap_{i=1}^r C_i \neq \emptyset$.

In this paper we settle an 
open problem posed in the 1970s by Kalai 
and reiterated in later surveys (e.g., \cite{BaranyKalai2022,Kalai-personal}) that asks for a generalization of Radon's Theorem (and Tverberg's Theorem) to sets which are not necessarily
convex nor even connected, but are the union of a bounded number of convex sets. 

\begin{definition}
    Let $s \geq 1$ be an integer. A set $C$ in $\Re^d$ 
is said to be $s$-convex if it is the union of $s$ convex sets. 
\end{definition}

\begin{problem}\label{prob:Radon}[\cite{BaranyKalai2022}-problem 6.6]
    Determine or estimate 
	the least integer $f=f(d,s,t)$ such that for any set $P$
    of $f$ points in $\Re^d$ there is a partition $P=A \cup B$ such that
    any $s$-convex set containing $A$ must intersect any $t$-convex set
    containing $B$.
\end{problem}

B\'ar\'any and Kalai explain in \cite{BaranyKalai2022} why 
$f(d,s,t)$ is finite using a Ramsey type argument. They mention that 
as is often the case when using hypergraph Ramsey Theory
the upper bound obtained is horrible, and add that it
would be nice to understand the correct behavior of this function.
Despite substantial attention, this problem remained unsolved 
for more than fifty years.

A more general problem is the following:
\begin{problem}\label{prob:Tverberg}
    Determine or estimate
	the least integer $f=f_r(d,s_1,\ldots,s_r)$ such that for
    any set $P$ of $f$ points in $\Re^d$ there is a partition into $r$
    pairwise disjoint sets $P = \bigcup_{i=1}^r P_i$ such that for
    any family of sets $C_1,\ldots,C_r$ with $P_i \subset C_i$ where
    $C_i$ is an $s_i$-convex set for every $i \in [r]$ we have that
    $\bigcap_{i=1}^r C_i \neq \emptyset$.
\end{problem}

Notice that Radon's theorem is equivalent to 
$f(d,1,1)=d+2$ and more generally
Tverberg's theorem is equivalent to $f_r(d,1,\ldots,1) = (r-1)(d+1)+1$.

\subsection{The main results}
 
Our first main
result is a fairly simple proof of the following near-optimal upper bound:

\begin{theorem}\label{thm:main}
$f(d,s,t)=O(dst \log( st+1) )$
\end{theorem}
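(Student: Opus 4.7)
My plan is a direct counting argument based on the sharp form of Sauer--Shelah applied to half-spaces. Fix $|P|=n$ and suppose, toward a contradiction, that every partition $P = A \sqcup B$ is bad. Write $\R \subseteq 2^{P}$ for the family of bad subsets $A$, so that $|\R|=2^{n}$. For each bad pair the hypothesis produces ordered decompositions $A = A_{1}\cup\cdots\cup A_{s}$ and $B = B_{1}\cup\cdots\cup B_{t}$ with $\CH(A_{i})\cap \CH(B_{j})=\emptyset$ for every $(i,j)\in [s]\times [t]$, and by the hyperplane separation theorem I may choose (strict) closed half-spaces $H_{ij}^{+}$, with complements $H_{ij}^{-}$, so that $A_{i}\subseteq H_{ij}^{+}$ and $B_{j}\subseteq H_{ij}^{-}$.

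The crux of the argument is the observation that $A$ is already determined by the $st$-tuple of half-space traces on $P$. Setting
\[
F \;:=\; \bigcup_{i=1}^{s}\bigcap_{j=1}^{t} H_{ij}^{+},
\]
every $p\in A_{i}$ lies in $H_{ij}^{+}$ for all $j$ and hence in $F$; conversely every $p\in B_{j}$ lies in $H_{ij}^{-}$ for all $i$, so $p\notin \bigcap_{j'}H_{ij'}^{+}$ for any $i$ (the $i$th intersection contains the offending factor $H_{ij}^{+}$), and hence $p\notin F$. Therefore $A = F\cap P$. Since $F\cap P$ depends on the $H_{ij}^{+}$ only through the traces $H_{ij}^{+}\cap P$, the assignment sending a bad $A$ to the tuple $(H_{ij}^{+}\cap P)_{i,j}$ is well-defined and injective on $\R$; consequently $|\R|\leq N^{st}$, where $N$ is the number of distinct traces of a half-space on $P$.

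To finish I will apply Sauer--Shelah to the family of half-spaces in $\Re^{d}$ (which has VC-dimension $d+1$), giving $N\leq \sum_{i=0}^{d+1}\binom{n}{i}\leq \bigl(en/(d+1)\bigr)^{d+1}$, and comparing with $|\R|=2^{n}$ yields
\[
n \;\leq\; (d+1)\,st\,\log_{2}\!\left(\frac{en}{d+1}\right).
\]
Plugging the target $n = C\,dst\log_{2}(st)$, a short calculation gives $\log_{2}(en/(d+1)) = \log_{2}(n)-\log_{2}(d+1)+O(1) = O(\log(st))$; the crucial cancellation is that the $\log d$ term hidden inside $\log n$ is absorbed by the $-\log(d+1)$ coming from the denominator in the sharp $(en/v)^{v}$ form of Sauer--Shelah, which is exactly what produces the bound $O(dst\log(st))$ rather than the weaker $O(dst\log(dst))$ one would get from the crude $n^{v}$ form. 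The displayed inequality then fails for $C$ a sufficiently large absolute constant, contradicting the assumption and proving the theorem.

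The main pitfall to beware of is counting ``labelings'' rather than trace tuples. If one first fixes the $st$ half-spaces and then assigns each point of $P$ a label in $[s]\cup [t]$ according to which $A_{i}$ or $B_{j}$ it belongs to, a residual factor of $(s+t)^{n}$ appears and swamps the Sauer--Shelah savings completely. The ``union-of-intersections'' shape $F=\bigcup_{i}\bigcap_{j}H_{ij}^{+}$, paired with its dual $\bigcup_{j}\bigcap_{i}H_{ij}^{-}$ absorbing $B$, is exactly what compresses the $s\times t$ grid of separating half-spaces into a single set whose trace on $P$ already pins down the partition.
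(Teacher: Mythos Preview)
Your proof is correct and is essentially the paper's argument with the black box unpacked: the paper packages your set $F=\bigcup_i\bigcap_j H_{ij}^+$ as a ``union of polytopes with at most $st$ facets'' (their Lemma~\ref{lem:separation}) and then quotes Matou\v{s}ek for the VC-dimension bound $O(dst\log(st))$ of that range space (their Lemma~\ref{lem:boolean-VC-dim}), whereas you inline the standard proof of that VC bound via Sauer--Shelah on half-spaces and the inequality $2^n\le N^{st}$. The geometry (pairwise hyperplane separation of the $A_i$ from the $B_j$) and the final calculation are identical.
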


 Our second main result is an extension of the theorem above to the
 following version of Tverberg's theorem for unions of convex sets and
 any $r \geq 2$:
 \begin{theorem}\label{thm:main-r-partition}
     $f_r(d,s_1,\ldots,s_r)=O \left(d r^2 \cdot \log r \cdot \Pi_{i=1}^r s_i 
\cdot {\ln (1+\Pi_{i=1}^r s_i})  \right)$
 
 \end{theorem}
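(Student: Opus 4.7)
The plan is to extend the extended VC-dimension framework underlying Theorem~\ref{thm:main} from the 2-party Radon setting to the $r$-party Tverberg setting. Write $S = \prod_{i=1}^r s_i$.

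The natural first approach is to iterate Theorem~\ref{thm:main}: peel off parts $P_1,\ldots,P_{r-1}$ one at a time via $r-1$ applications of the Radon-type theorem (with parameters $s_i$ and $\prod_{j>i} s_j$ at step $i$), and set $P_r$ to be the remainder. The obstacle here is that the 2-party Radon guarantee does not automatically compose into an $r$-wise intersection: given $s_i$-convex $C_i \supseteq P_i$, the set $\bigcap_{j\geq 2} C_j$ is $(S/s_1)$-convex but need not contain $P_2 \cup \cdots \cup P_r$, while $\bigcup_{j\geq 2} C_j$ does contain these points but is only $(\sum_{j\geq 2} s_j)$-convex and yields merely that $C_1$ meets some $C_j$, not their full intersection. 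Working around this directly seems to require the second strategy below.

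I would therefore redo the extended VC-dimension argument in the multi-party setting. Define the \emph{$r$-party extended VC-dimension} of an $r$-tuple $(\mathcal{F}_1,\ldots,\mathcal{F}_r)$ of set families as the largest $m$ such that there is an $m$-point set $P$ with the property that for every partition $P = P_1 \cup \cdots \cup P_r$ one can choose $C_i \in \mathcal{F}_i$ with $P_i \subseteq C_i$ and $\bigcap_i C_i = \emptyset$. By construction, any point set exceeding this quantity already admits the Tverberg-type partition claimed in Theorem~\ref{thm:main-r-partition}, so the theorem reduces to an upper bound on the $r$-party extended VC-dim of the $r$-tuple of families of $s_i$-convex sets in $\Re^d$.

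The main technical challenge, and the expected principal obstacle, is exactly this upper bound, which should come out to $O(d r^2 \log r \cdot S \log S)$. I would argue via a Sauer--Shelah style count adapted to the $r$-party setting: each $s_i$-convex set in $\Re^d$ is a union of $s_i$ convex sets (whose individual shatter function on any $m$-point set is $O(m^{d+1})$), so the number of possible traces on a fixed $m$-point set of an $r$-tuple $(C_1,\ldots,C_r)$, together with the trace of the $S$-convex intersection $\bigcap_i C_i = \bigcup_{\vec{j}} \bigcap_i C_i^{(j_i)}$, is bounded polynomially in $m$ with the exponent controlled by $dS$. An $r$-shattered set of size $m$ requires $r^m$ distinct empty-intersection witness tuples indexed by the $r^m$ partitions of $P$, and balancing $r^m$ against the trace count yields the bound; the extra $r^2 \log r$ factor beyond the natural $O(dS\log S)$ from the two-party case arises from the simultaneous control over $r$-wise intersection patterns and the union bound over partitions in the shattering argument.
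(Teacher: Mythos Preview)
Your high-level strategy matches the paper's: define an $r$-party shattering notion and bound the maximum size of an $r$-shattered set by a Sauer--Shelah style count. But two genuine gaps remain.

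First, the parenthetical claim that an individual convex set has shatter function $O(m^{d+1})$ on an $m$-point set is false: for $m$ points in convex position in $\Re^d$ ($d\geq 2$), every one of the $2^m$ subsets is cut out by some convex set, so the family of convex sets (hence of $s$-convex sets) has unbounded VC-dimension and Sauer--Shelah does not apply to it directly. The paper's repair is a separation lemma (Lemma~\ref{lem:r-separation}): whenever $s$-convex sets $C_1,\ldots,C_r$ have empty common intersection, each $C_i$ can be replaced by a superset $K_i$ that is a union of $s$ polytopes with at most $s^r$ facets in total, still with $\bigcap_i K_i=\emptyset$. It is the hypergraph of such bounded-facet polytope unions whose VC-dimension is $O(d\,s^r\log s^r)$ (Lemma~\ref{lem:boolean-VC-dim}), and the counting argument is applied to \emph{that} hypergraph. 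Your sketch omits this reduction entirely.

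Second, your counting does not produce the $r^2\log r$ factor. Saying an $r$-shattered $m$-set ``requires $r^m$ distinct empty-intersection witness tuples'' is incorrect: a single tuple $(e_1,\ldots,e_r)$ with $S\cap\bigcap_i e_i=\emptyset$ can witness many partitions --- but at most $(r-1)^m$ of them, since each point of $S$ lies in at most $r-1$ of the $e_i$ and hence has at most $r-1$ choices for its part. Thus one needs $(\text{\# trace tuples})\cdot(r-1)^m \geq r^m$, i.e., at least $(r/(r-1))^m\approx e^{m/r}$ tuples; balancing this against the Sauer--Shelah bound $\bigl(\sum_{i\le d'}\binom{m}{i}\bigr)^r$ with $d'=O(dS\log S)$ is exactly what yields $m=O(d'r^2\log r)$. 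Your final sentence hand-waves at ``simultaneous control'' and ``union bound'' without this mechanism, and the version you state (requiring $r^m$ distinct tuples) would give the wrong dependence on $r$.
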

 
\begin{remark}
	Following our paper,
Chen et al. obtain in \cite{ChenETAL2025} nearly matching lower bounds
for $d \geq 2r-2$. For this range of $d$ and $r \geq 2$ they prove
that for all $s$,
	$$f_r(d,s,\ldots,s) \geq (d-2r+4) s^r.$$
\end{remark}
When each of the $s_i$-convex sets in the last theorem is a union of convex components with bounded overlap (specifically, $(\ell+1)$-wise disjoint), we obtain a sharper bound; see Theorem~\ref{thm:l+1-wise}.

A notable feature of our proofs is that they are mainly 
combinatorial and therefore work for abstract
separable convexity spaces with bounded Radon numbers, as we proceed to explain.

\begin{definition}    
An \emph{abstract convexity space} is a pair $(X, \mathcal{C})$ where $X$ is a set and $\mathcal{C} \subseteq 2^X$ is a family of subsets of $X$ called \emph{convex sets}, satisfying the following axioms:
\begin{enumerate}
  \item $X \in \mathcal{C}$ and $\emptyset \in \mathcal{C}$.
  \item $\mathcal{C}$ is closed under intersections, i.e., if $\D \subseteq \mathcal{C}$, then $\bigcap \D  \in \mathcal{C}$.
\end{enumerate}
\end{definition}
This generalizes the notion of convex sets in $\Re^d$. Abstract convexity spaces arise naturally in combinatorics, geometry, and lattice theory, and provide a unifying framework to study convexity. For more on abstract convexity spaces, see, e.g., \cite{Holmsen2025,MoranYehud20,vandevel1993}.

For a subset $P \subset X$ in an abstract convexity space $(X,\calC)$ define its convex hull $\text{CH}(P)$ to be $\text{CH}(P) = \bigcap_{P \subset S, S \in \calC} S$.

For a set $P \subset X$ we say that $P = P_1 \cup P_2$ is a Radon partition if $\text{CH}(P_1) \cap \text{CH}(P_2)\neq \emptyset$.
We say that an $r$ partition $P=\bigcup_{i=1}^r P_i$ is an $r$-Tverberg partition of $P$ if $\bigcap_{i=1}^r \text{CH}(P_i) \neq \emptyset$.
The {\em Radon number} $r(X, \calC)$ of a space $(X,\calC)$ is the minimum integer $n$ such that any subset $P \subset X$ with $\cardin{P}\geq n$ admits a Radon partition. If $n$ is unbounded then the Radon number is $\infty$. Similarly, the $r$'th Tverberg number $T_r(X,\calC)$ is the minimum integer $n$ such that any subset $P \subset X$ with $\cardin{P}\geq n$ admits an $r$-Tverberg partition. 

A convex set $H \in \calC$ in an abstract convexity space $(X,\calC)$ is called a {\em halfspace} if its complement is also a convex set i.e.,  $X\setminus H \in \calC$.

We say that an abstract convexity space $(X,\calC)$ is {\em separable} if for any two disjoint sets there exists a separating halfspace. Formally, for any $C_1 \in \calC $ and $C_2 \in \calC$  if $C_1 \cap C_2 = \emptyset $ then there exists a halfspace $H$ such that $C_1 \subset H, C_2 \subset  X \setminus H$. 

Our proof technique of Theorem~\ref{thm:main} and Theorem~\ref{thm:main-r-partition} works for abstract convexity spaces which are separable and have a bounded Radon number. Let $(X,\calC)$ be a separable abstract convexity space with Radon number $d=r(X,\calC)$.
Call a subset $A \subset X$ $s$-convex if it is the union of $s$ convex sets in $\calC$.
Let $F=F(d,s,t)$ be the minimum integer so that any set $P$ of cardinality at least $F$ in a separable abstract convexity space $(X, \calC)$ with Radon number $d$ admits
a partition $P = P_1 \cup P_2$ such that any $s$-convex set containing $P_1$ must intersect any $t$-convex set containing $P_2$. We have:

\begin{theorem}\label{thm:main-abstractspaces}
     $F(d,s,t)=O(d st \log {(st+1)})$.
 \end{theorem}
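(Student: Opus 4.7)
The plan is a Sauer--Shelah style counting argument over halfspaces, exploiting that in any separable abstract convexity space with Radon number $d$, the family of halfspaces has VC-dimension at most $d-1$ on any finite subset of $X$.

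First I would verify this VC-dimension bound. Suppose some $d$-element set $S \subseteq X$ were shattered by halfspaces. By the Radon-number hypothesis, $S$ admits a partition $S=S_1\cup S_2$ with $\mathrm{CH}(S_1)\cap \mathrm{CH}(S_2)\neq\emptyset$. Picking a halfspace $H$ with $H\cap S=S_1$, convexity of $H$ forces $\mathrm{CH}(S_1)\subseteq H$ and convexity of $X\setminus H$ forces $\mathrm{CH}(S_2)\subseteq X\setminus H$, contradicting the non-empty intersection. Hence by Sauer--Shelah, the number of distinct halfspace-traces on any $n$-point set $P$ is at most
\[
\Phi(n)\;:=\;\sum_{i=0}^{d-1}\binom{n}{i}\;\leq\;\left(\frac{en}{d-1}\right)^{d-1}.
\]

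Next I would characterize the bad partitions combinatorially. Call $(P_1,P_2)$ \emph{bad} if there exist an $s$-convex set $A\supseteq P_1$ and a $t$-convex set $B\supseteq P_2$ with $A\cap B=\emptyset$. Writing $A=\bigcup_{i=1}^{s}A_i$ and $B=\bigcup_{j=1}^{t}B_j$ with convex $A_i,B_j$, pairwise disjointness together with separability supplies, for every $(i,j)\in [s]\times[t]$, a halfspace $H_{ij}$ with $A_i\subseteq H_{ij}$ and $B_j\subseteq X\setminus H_{ij}$. Define
\[
U\;=\;\bigcup_{i=1}^{s}\bigcap_{j=1}^{t}H_{ij},\qquad V\;=\;\bigcup_{j=1}^{t}\bigcap_{i=1}^{s}(X\setminus H_{ij}).
\]
A one-line pigeonhole argument shows $U\cap V=\emptyset$ (any point in the intersection would lie in some $H_{i_0 j_0}$ and in its complement), while by construction $P_1\subseteq U$ and $P_2\subseteq V$. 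Since $P_1\cup P_2=P$, this forces $(P_1,P_2)=(P\cap U,\,P\cap V)$. Crucially, $U\cap P$ and $V\cap P$ depend only on the traces $H_{ij}\cap P$, so each bad partition is determined by a single $st$-tuple of halfspace-traces on $P$.

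Finally, the number of $st$-tuples of halfspace-traces on $P$ is at most $\Phi(n)^{st}$, so the number of bad partitions is at most $(en/(d-1))^{(d-1)st}$. It then suffices to choose $n$ so that this count is strictly smaller than the total number $2^n$ of partitions of $P$; taking $n=C\,d\,s\,t\,\log(st)$ for a sufficiently large absolute constant $C$, the base-$2$ logarithm of the upper bound is $(d-1)st\log_2(en/(d-1))=O(dst\log(st))\leq n/2$, so the inequality holds and yields $F(d,s,t)=O(dst\log(st))$. The subtle point --- rather than a real obstacle --- is the collapsing step: a bad partition \emph{a priori} carries extra data (two colorings $f_1\colon P_1\to[s]$, $f_2\colon P_2\to[t]$ plus $st$ halfspaces), and a naive union bound over this data introduces a factor $(s+t)^n$ that would swamp $2^n$. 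The $(U,V)$ construction sidesteps the colorings entirely and drives the count purely by the $\Phi(n)^{st}$ trace-tuples, which is precisely what delivers the $O(dst\log(st))$ bound.
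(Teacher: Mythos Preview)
Your argument is correct and is essentially the paper's proof: both separate a disjoint pair $(A,B)$ by the set $U=\bigcup_{i}\bigcap_{j}H_{ij}$ built from $st$ separating halfspaces (the paper's Lemma~2.1 in the abstract setting), and then bound the number of traces of such sets on $P$ via Sauer--Shelah on the halfspace hypergraph. The only cosmetic difference is that the paper packages the counting step as the standard $O(d\ell\log\ell)$ VC-dimension bound for $\ell$-fold Boolean combinations (Lemma~2.2) and then takes one more point than this dimension, whereas you unpack that bound and compare $\Phi(n)^{st}$ with $2^n$ directly.
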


More generally let $F=F_r(d,s_1,\ldots,s_r)$ be the minimum integer so that any set $P$ of cardinality at least $F$ in a separable abstract convexity space $(X, \calC)$ with Radon number $d$ admits
a partition $P = \bigcup_{i=1}^r P_i$
into $r$ pairwise disjoint sets $P = \bigcup_{i=1}^r P_i$ such that for
    any family of sets $C_1,\ldots,C_r$ with $P_i \subset C_i$ where
    $C_i$ is an $s_i$-convex set for every $i \in [r]$ we have that
    $\bigcap_{i=1}^r C_i \neq \emptyset$.

\begin{theorem}\label{thm:main-abstractspaces-tverberg}
     $F_r(d,s_1,\ldots,s_r)=O\left (d r^2 \cdot \log r \cdot \Pi_{i=1}^r s_i 
\cdot {\ln (1+\Pi_{i=1}^r s_i}) \right )$.
 \end{theorem}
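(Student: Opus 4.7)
The plan is to deduce Theorem~\ref{thm:main-abstractspaces-tverberg} from the Radon-type result (Theorem~\ref{thm:main-abstractspaces}) by combining it with a Radon-to-Tverberg reduction. First, I will interpret Theorem~\ref{thm:main-abstractspaces} as establishing a generalized ``Radon number'' $d' = O(d \cdot s t \log(s t))$ for the setting in which ``convex'' is replaced by ``$s$-convex'' and ``$t$-convex'' on the two sides of the partition. That is, for any $s,t$, every set of more than $d'$ points in the separable abstract convexity space $(X,\calC)$ admits a Radon partition whose two classes cannot be simultaneously covered by disjoint $s$- and $t$-convex supersets.

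Next, I would invoke a generic Radon-to-Tverberg reduction for separable abstract convexity spaces: any convexity space in which every sufficiently large set admits a Radon partition (with respect to the relevant classes of ``convex'' sets) satisfies an analogous $r$-Tverberg property at the cost of a multiplicative factor of $O(r^2 \log r)$. This reduction is standard in the abstract convexity literature; its proof is an iterative procedure that uses $O(\log r)$ rounds of Radon partitions to progressively refine a $2$-partition into an $r$-partition, each round introducing an overhead proportional to $r$. A direct random-coloring union-bound for $r\ge3$ is problematic in this abstract setting because the emptiness of an intersection of $r\ge3$ convex sets is not witnessed by a single separating halfspace, so funneling the argument through Radon partitions and the iterative reduction is essential.

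Combining the two steps and propagating the union parameters $s_i$ through the iteration so that their product on each side of the recursion telescopes to $S := \prod_{i=1}^r s_i$ gives a generalized Radon number $d' = O(d \cdot S \log S)$, hence $F_r \le O(r^2 \log r \cdot d') = O(d \cdot r^2 \log r \cdot S \log S)$, matching the claim. The hard part will be this propagation: since the family of $s$-convex sets is not closed under intersection, it does not itself form an abstract convexity space, so the classical Radon-to-Tverberg iteration must be reformulated to rely only on the Radon-partition property guaranteed by Theorem~\ref{thm:main-abstractspaces} at each recursive step, with careful bookkeeping of how the ``$s$''-parameters on each subproblem combine. This bookkeeping is precisely the source of the final $\prod_{i=1}^r s_i \cdot \log \prod_{i=1}^r s_i$ factor in the bound.
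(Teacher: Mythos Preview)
Your plan diverges from the paper's argument and has a real gap.

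The paper does \emph{not} reduce to the $r=2$ case via a Radon-to-Tverberg iteration. Instead it works directly with the notion of an \emph{$r$-shattered set} (Definition~\ref{defgen}) and proves the purely combinatorial Lemma~\ref{lem:generalized-shatter}: in any hypergraph of VC-dimension $d'$, an $r$-shattered set has size at most $Cd'r^{2}\log r$. The geometric input is an $r$-fold separation lemma (Lemma~\ref{lem:r-separation}): if $s$-convex sets $C_1,\dots,C_r$ have empty common intersection, then iteratively separating each $C_i$ from $\bigcap_{j\neq i}C_j$ (which is $s^{r-1}$-convex) replaces each $C_i$ by a union of polytopes with at most $s^{r}$ facets while preserving empty intersection. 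In a separable abstract convexity space this step goes through verbatim with halfspaces in place of hyperplanes; the halfspace hypergraph has VC-dimension at most $d-1$ by the Radon assumption, so the relevant Boolean hypergraph has VC-dimension $d'=O(d\,s^{r}\log s^{r})$, and the $r$-shatter lemma finishes. Thus the $\prod s_i$ factor arises from a single geometric separation step, not from recursive bookkeeping.

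Your assertion that a direct argument for $r\ge 3$ is ``problematic because emptiness of an $r$-fold intersection is not witnessed by a single halfspace'' is precisely what Lemma~\ref{lem:r-separation} overturns: emptiness \emph{is} witnessed by a bounded Boolean combination of halfspaces, which suffices to bound VC-dimension. Conversely, the Radon-to-Tverberg reductions you invoke (Bukh, P\'alv\"olgyi) depend on a genuine convex-hull operator with monotonicity and idempotence; as you yourself note, $s$-convex sets are not closed under intersection, so no such operator is available. You offer no concrete mechanism for the ``reformulated'' iteration, and there is no evident reason the $s$-parameters across $O(\log r)$ rounds should telescope to $\prod_i s_i$ rather than compound multiplicatively at each round into something far larger. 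Without that mechanism the proposal is not a proof, and the simplest fix is exactly the paper's route: the $r$-separation lemma plus the extended shatter bound.
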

 
Theorem~\ref{thm:main-abstractspaces-tverberg} for the special case $s_i=1$ for all $i$ provides an upper bound of $O(d r^2 \log r)$ on the so-called Tverberg number of a separable abstract convexity space in terms of its Radon number $d$. This improves the upper bound of $c(d)r^2 \log ^2 r$ by Bukh~\cite{Bukh2010} who proved it for the more general setting of (not necessarily separable) abstract convexity spaces. 
In~\cite{palvolgyi-radon} P\'alv\"olgyi 
provided an upper bound of the form $O(d^{d^{d^{\log d}}}r)$ which is linear in $r$ but super exponential in $d$.

We describe here the proofs of Theorem \ref{thm:main} and Theorem
\ref{thm:main-r-partition}. Essentially the same proofs establish the corresponding results, Theorem \ref{thm:main-abstractspaces} and Theorem
\ref{thm:main-abstractspaces-tverberg},
for  abstract separable convexity spaces.


 \subsection{Structure}
The rest of this paper is organized as follows. In Section~\ref{sec:prelim} we provide background on VC-dimension and shatter functions, and introduce the $r$-VC-dimension. In Section~\ref{sec:proof-thm-main} 
we describe the proof of Theorem \ref{thm:main}. The proof of Theorem \ref{thm:main-r-partition} is given in Section~\ref{sec:Generalized Tverberg}. After a discussion of 
lower and upper bounds for the relevant functions for some special
cases in Section~\ref{sec:special-cases} we suggest a few open problems in the final Section~\ref{sec:conclusion}.

\section{Preliminaries: VC-dimension and $r$-VC-dimension}
\label{sec:prelim}
\noindent
The Vapnik-Chervonenkis dimension of a hypergraph (which is defined below) is a measure of
its complexity. This notion plays a central role in statistical learning,
computational geometry, and other areas of computer science and
combinatorics (see, e.g.,~\cite{AHW87,ABKKW2006,AMY17,BEHW89,MV18}).  
Many graphs and
hypergraphs that arise in geometry have bounded VC-dimension.
Our proof of Theorem \ref{thm:main} is based on some simple properties of the VC-dimension and the shatter function of the relevant hypergraphs (also defined below). In order to prove Theorem \ref{thm:main-r-partition} we define an extended version of the VC-dimension and show how to bound it in terms of the usual dimension.
\vspace{0.2cm}

 \begin{definition}[VC-dimension] \label{def:VC-dim} The
 \emph{Vapnik-Chervonenkis dimension} $VC(H)$ of a hypergraph $H = (V, E)$
 is the largest integer $d$ such that there exists a subset $S \subseteq
 V$ (not necessarily in $E$) with $|S| = d$ that is \emph{shattered} by
 $E$. A subset $S$ is said to be shattered by $E$ if, for every subset
 $T \subseteq S$, there exists a hyperedge $e \in E$ such that $e \cap
 S = T$.
\end{definition}

\begin{remark}
    Note that in a hypergraph $H=(V,E)$ a subset $S$ is shattered iff 
for every partition $S = A \cup B$ (with $A \cap B = \emptyset$) 
	there exist two hyperedges $e_1 \in E$ and $e_2 \in E$ such that $A \subset e_1$, $B \subset e_2$ and $e_1\cap e_2 \cap S = \emptyset$. 
	We will need a generalization of this notion to partitions 
	into $r>2$ sets. This generalization is given 
	below in Definition~\ref{defgen}.
\end{remark}

Note that by the fact that every two disjoint convex sets in $\Re^d$ are separable by  a halfspace, Radon's Theorem implies that no set of $d+2$ points in $\Re^d$ is shattered by halfspaces. Namely, if ${\cal H}_d$ is the family of all halfspaces in $\Re^d$ then the VC-dimension of the hypergraph $(\Re^d,{\cal H}_d)$ is at most $d+1$.
This simple observation holds for every separable abstract convexity space $(X,\calC)$ with Radon number $d$. In that case the VC-dimension of the hypergraph $(X,\cal H)$ where $\cal H$ is the family of all halfspaces in $\calC$ is at most $d-1$.

 \begin{definition}

The \emph{primal shatter function} of a hypergraph $H = (V, E)$ is
the following function $\pi_H : \mathbb{N} \rightarrow \mathbb{N}$:
\[
\pi_H(m) = \max_{S \subseteq V, |S| = m} |\{ S \cap e : e \in E \}|.
\]
 The value $\pi_H(m)$ represents the maximum number of distinct subsets
 of a set $S$ of cardinality $m$ that can be realized as intersections
 with hyperedges in $E$.
 \end{definition}

The following lemma, known as the Sauer-Shelah-Perles  lemma, provides
an upper bound on the shatter function for hypergraphs with bounded
VC-dimension (See, e.g., \cite{MATOUSEK}):
\begin{lemma}[{\bf Sauer-Shelah-Perles}]
    \label{Lem:Perles-Sauer-Shelah}
    Let $H = (V, E)$ be a hypergraph with 
VC dimension $d$. Then
\[
\pi_H(m) \leq \sum_{i=0}^{d} \binom{m}{i}.
\]
In particular, if $m > d$, then $\pi_H(m) \leq (\frac{em}{d})^d$.
\end{lemma}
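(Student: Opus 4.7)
The plan is to prove the stronger ``Pajor inequality''---that every finite set system $(W, \famF)$ shatters at least $\cardin{\famF}$ subsets of its ground set---and then deduce the stated shatter-function bound. The reduction is immediate: apply Pajor to the trace system $\famT_S = \{S \cap e : e \in E\}$ for an arbitrary $S \subseteq V$ with $\cardin{S} = m$. Since $\famT_S$ inherits VC-dimension at most $d$, it can shatter only subsets of $S$ of size at most $d$, of which there are at most $\sum_{i=0}^{d}\binom{m}{i}$. Pajor's inequality then bounds $\cardin{\famT_S}$ by this same quantity, giving $\pi_H(m) \leq \sum_{i=0}^{d}\binom{m}{i}$.

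To establish Pajor's inequality, I would induct on $\cardin{W}$. The base case $\cardin{W}=0$ is trivial since $\famF$ has at most one element and the empty set is vacuously shattered. For the inductive step, pick any $v \in W$, write $W' = W\setminus\{v\}$, and decompose $\famF$ into $\famF_0 = \{e \in \famF : v \notin e\}$ and $\famF_1 = \{e \setminus \{v\} : e \in \famF,\ v\in e\}$, both regarded as set systems on $W'$. Then $\cardin{\famF} = \cardin{\famF_0} + \cardin{\famF_1}$, and by induction $\cardin{\famF_i} \leq \cardin{\mathrm{Sh}(\famF_i)}$ for $i \in \{0,1\}$, where $\mathrm{Sh}(\cdot)$ denotes the family of subsets of $W'$ shattered by the corresponding system.

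The key combinatorial step is then the inequality $\cardin{\mathrm{Sh}(\famF)} \geq \cardin{\mathrm{Sh}(\famF_0)} + \cardin{\mathrm{Sh}(\famF_1)}$. I would verify it by two short observations: (i) any $T \subseteq W'$ shattered by $\famF_0$ or by $\famF_1$ is already shattered by $\famF$; and (ii) if $T$ is shattered by \emph{both} $\famF_0$ and $\famF_1$, then $T \cup \{v\}$---which is absent from $\mathrm{Sh}(\famF_0) \cup \mathrm{Sh}(\famF_1)$ because it contains $v$---is shattered by $\famF$, by exhibiting for each $T' \subseteq T$ a witness from $\famF_0$ that gives the trace $T'$ and a witness $e \in \famF$ with $v \in e$ (coming from $\famF_1$) that gives the trace $T' \cup \{v\}$. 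An inclusion--exclusion count on $\mathrm{Sh}(\famF_0) \cup \mathrm{Sh}(\famF_1)$ versus $\mathrm{Sh}(\famF_0) \cap \mathrm{Sh}(\famF_1)$ then yields the desired inequality and closes the induction.

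The main obstacle is the bookkeeping in observation (ii): one must carefully separate the role of witnesses containing $v$ from those that do not, and verify that the two halves of the shattered family together contribute exactly $\cardin{\mathrm{Sh}(\famF_0)} + \cardin{\mathrm{Sh}(\famF_1)}$ distinct shattered sets of $W$. Once this is in place the rest is mechanical. The ``in particular'' estimate $\sum_{i=0}^{d}\binom{m}{i} \leq (em/d)^d$ for $m>d$ follows from the standard manipulation of multiplying each binomial by $(m/d)^{d-i}\geq 1$ and using $(1+d/m)^m \leq e^d$.
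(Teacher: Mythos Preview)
Your proof is correct. The Pajor-inequality route---showing that any finite system $\famF$ shatters at least $\cardin{\famF}$ sets, by the link--delete induction you outline---is one of the standard clean proofs of Sauer--Perles--Shelah, and your verification of the key step (ii) and the inclusion--exclusion count is accurate. The estimate $\sum_{i\le d}\binom{m}{i}\le (em/d)^d$ is also handled correctly.

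There is nothing to compare against: the paper does not supply its own proof of this lemma but simply quotes it as a known result, referring the reader to Matou\v{s}ek's \emph{Lectures on Discrete Geometry}. Your argument is consistent with (and in fact slightly sharper than) the version found there, which proceeds by the same element-deletion induction but is usually phrased directly for the shatter-function bound rather than via Pajor's stronger statement.
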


In order to tackle Problem~\ref{prob:Tverberg} we need to
develop an analogous notion of a shattered set in hypergraphs with
bounded VC-dimension for partitions with more than $2$ parts. The relevant 
definition follows. The motivation for this definition will become clear from its application in the study of Problem 
\ref{prob:Tverberg}

\begin{definition}[$r$-VC-dimension]
\label{defgen}
    Let $H=(V,E)$ be a fixed hypergraph.
    Fix an arbitrary set $S \subseteq V$ and consider a partition of $S$ into $r$ pairwise disjoint sets $S=S_1 \cup \cdots \cup S_r$. We call such a partition {\em realizable} if there exist hyperedges $e_1,\ldots, e_r \in E$ such that $S_i \subset e_i$ for all $i \in [r]$ and $S \cap \bigcap_{i=1}^r e_i = \emptyset$.    
    A subset $S \subset V$ is said
    to be {\em $r$-shattered} by $E$ if any partition of $S$ into $r$
    pairwise disjoint sets is realizable. For a Hypergraph $H=(V,E)$ and an integer $r \geq 2$ its $r$-VC-dimension is defined to be the supremum of the possible sizes of an $r$-shattered set in $H$.
\end{definition}
\begin{remark}
    For any $2 \leq r_1 < r_2$ if a subset $S$ is $r_1$-shattered then it is also $r_2$ shattered. Indeed, consider any partition $S=A_1\cup A_2 \cup  \cdots \cup A_{r_2}$. Then by uniting $r_2-r_1+1$ sets of that partition we get an $r_1$ partition, say $S=A_1 \cup \cdots \cup A_{r_1-1} \cup B$ for which there is a family of hyperedges $e_1,\ldots,e_{r_1}$ ($A_1 \subseteq e_1, \ldots, A_{r_1-1} \subseteq e_{r_1-1}, B \subseteq e_{r_1}$ for which $S \cap \bigcap_{i=1}^{r_1} e_i = \emptyset$ ) witnessing the partition which is realizable. The same family of hyperedges with $e_{r_1}$ taken $r_2-r_1 +1$ times is a witness for the $r_2$ partition. Thus, the $r_1$-VC-dimension is at most the $r_2$-VC-dimension. 
\end{remark}

The following lemma provides an upper bound on the $r$-VC-dimension as a function of its classical VC-dimension: 

\begin{lemma}
\label{lem:generalized-shatter}
There exists an absolute constant $C$ such that for every
integer $d$ and any hypergraph $H=(V,E)$ 
with VC-dimension $d$ the following holds. For
every integer $r \geq 2$, the $r$-VC-dimension is at most $Cdr^2 \log r$. This bound is nearly optimal: for every $d$ and $r$ 
there is a hypergraph with VC-dimension $d$ that admits an $r$-shattered set of size $\Omega(dr^2)$.
\end{lemma}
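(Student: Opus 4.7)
My plan is to establish the upper bound by a double-counting argument combined with the Sauer--Perles--Shelah lemma, and the lower bound by an explicit construction.

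For the upper bound, let $S$ be an $r$-shattered set of size $n$ in a hypergraph $H = (V,E)$ of VC-dimension $d$. For each ordered partition $\sigma = (S_1,\ldots,S_r) \in [r]^S$, the $r$-shattering hypothesis supplies hyperedges $e_1,\ldots,e_r \in E$ with $S_k \subseteq e_k$ for every $k$ and $S \cap \bigcap_k e_k = \emptyset$; the traces $T_k = e_k \cap S$ then form an $r$-tuple of subsets of $S$ with $\bigcap_k T_k = \emptyset$ that \emph{refines} $\sigma$, meaning $S_k \subseteq T_k$ for every $k$. I would now count in two ways the pairs $(\sigma,(T_1,\ldots,T_r))$ in which the tuple refines the partition. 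On one hand, each of the $r^n$ ordered partitions admits at least one refining tuple. On the other hand, for any fixed $r$-tuple with $\bigcap_k T_k = \emptyset$, every $x \in S$ lies in at most $r-1$ of the $T_k$, so the number of partitions refined by this tuple is at most $\prod_{x \in S}|\{k : x \in T_k\}| \leq (r-1)^n$. Since by Sauer--Perles--Shelah the number of distinct traces on $S$ is at most $(en/d)^d$, the number of $r$-tuples of traces is at most $(en/d)^{dr}$, and therefore
\[
r^n \leq (en/d)^{dr} \cdot (r-1)^n.
\]
Taking logarithms and using the elementary bound $\ln(r/(r-1)) \geq 1/r$ yields $n \leq dr^2\ln(en/d)$; a short bootstrap (substituting back $n = C\,dr^2\log r$ and solving for $C$) then shows that any solution satisfies $n = O(dr^2 \log r)$, giving the upper bound.

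For the lower bound, the plan is to exhibit for every $d$ and $r$ a hypergraph of VC-dimension at most $d$ admitting an $r$-shattered set of size $\Omega(dr^2)$. The template is to take the ground set $V$ to be the disjoint union of $d$ ``slices'' $V_1,\ldots,V_d$, each of size $\Theta(r^2)$, and the hyperedges to be ``localised'' sets that are constrained on a single slice and contain each of the other slices entirely; this structure forces any shattered set to contain at most one element per slice and hence keeps the VC-dimension at most $d$. To witness $r$-shatterability, given an $r$-partition of $V$ one distributes the parts across the slices and, within each slice separately, uses a pigeonhole/Hall-type argument — exploiting the fact that the slice holds $\Theta(r^2)$ elements while only ``expecting'' $O(r)$ parts of the global partition — to produce per-slice hyperedges whose combined intersection misses every point of the slice.

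The main obstacle will be engineering the per-slice gadget so that \emph{every} $r$-partition of its $\Theta(r^2)$ elements, not merely balanced or typical ones, can be witnessed. Naive gadgets such as a single row or column of a grid, nested chains, or complements of small sets only yield $r$-shattered sets of size $O(r)$ per slice, falling short of the target by a factor of $r$. Achieving $\Omega(r^2)$ per slice requires a construction that genuinely exploits the $(r-1)$-fold flexibility inherent in the empty-intersection condition — the very slack that drives the upper bound — and realising this flexibility within a VC-dimension-one family is the crux of the lower bound argument.
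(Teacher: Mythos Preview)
Your upper-bound argument is correct and essentially identical to the paper's: both count, over a putative $r$-shattered set $S$ of size $n$, the pairs consisting of an ordered $r$-partition and a witnessing $r$-tuple of traces; Sauer--Perles--Shelah bounds the number of trace-tuples by $\bigl(\sum_{i\le d}\binom{n}{i}\bigr)^r$, each tuple with empty common intersection refines at most $(r-1)^n$ ordered partitions, and comparing with the $r^n$ partitions that must be covered yields $n=O(dr^2\log r)$.

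Your lower-bound plan, however, has a genuine gap --- precisely the one you flag yourself. You propose a product of $d$ slices, each carrying a VC-dimension-$1$ family, and hope each slice admits an $r$-shattered set of size $\Omega(r^2)$; but you do not construct the per-slice gadget, and you acknowledge that the natural candidates (chains, singletons, complements of small sets) give only $O(r)$. Without this gadget the argument does not get off the ground, and it is far from clear that any VC-dimension-$1$ family can supply it.

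The paper sidesteps this obstacle by \emph{not} reducing to VC-dimension $1$. It works directly with a single hypergraph of VC-dimension $D=2s$: take $n=\Theta(sr^2)$ points on the real line, with hyperedges being all unions of at most $s$ intervals. Split the points into $p=\lfloor(s-1)/2\rfloor\cdot r/2$ consecutive blocks of $r/2$ points each. Given any $r$-colouring, greedily assign to each block a colour that is \emph{absent} from it; this is always possible, since a block of $r/2$ points omits at least $r/2$ colours, while fewer than $r/2$ colours have already been assigned their quota of $\lfloor(s-1)/2\rfloor$ blocks. For each colour $i$, let $e_i$ be the union of the intervals spanned by the maximal runs of blocks not assigned to $i$; then $e_i$ is a union of at most $s$ intervals containing every colour-$i$ point, yet for every block $q$ the hyperedge $e_{i_q}$ avoids that block entirely, so $\bigcap_i e_i=\emptyset$. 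This exhibits an $r$-shattered set of size $\Omega(sr^2)=\Omega(Dr^2)$ in one stroke, without any per-slice engineering.
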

The proof of the upper bound in the lemma is described in Section~\ref{sec:Generalized Tverberg}.
The proof of the lower bound is given implicitly in Theorem~\ref{t42}
as explained in the remark following it.

Even though we don't use the following lemma in this paper we feel that it is worth mentioning as it provides an extension of the Sauer-Shelah-Perles Lemma for hypergraphs with $r$-VC-dimension $t$. 
The proof follows immediately from Corollary 1 in \cite{Alon1983}.

\begin{lemma}[$r$-shatter-function]
\label{lem:generalized-Sauer-Shelah}
Let $H=(V,E)$ be a hypergraph with $r$-VC-dimension $t$.
For a subset $S \subset V$ let $\pi_r(S)$ denote the number of realizable $r$-partitions of $S$.
Let $\pi_r(m)$ denote the maximum of $\{\pi_r(S) \mid |S|=m\}$.
Then we have:
$$\pi_r(m)\leq  \sum_{i=0}^t {m \choose i}(r-1)^{m-i} 
	\leq (r-1)^m\sum_{i=0}^t {m \choose i }.$$
\end{lemma}

\section{Proof of Theorem~\ref{thm:main}}
\label{sec:proof-thm-main}
Before proceeding to the proof of Theorem~\ref{thm:main} we need the
following two simple lemmas.
 \begin{lemma}\label{lem:separation}
     Let $C_1$ be an $s$-convex set in $\Re^d$ and $C_2$ a $t$-convex
     set. Assume that $C_1 \cap C_2 = \emptyset$. Then there exist
     $s$ convex polytopes $K_1,\ldots, K_s$ each having at most
$t$ facets whose union covers $C_1$ so that the complement of
     the union covers $C_2$. Namely,  $C_1 \subset \bigcup_{i=1}^s K_i$
     and $C_2 \subset \overline{\bigcup_{i=1}^s K_i}$.
 \end{lemma}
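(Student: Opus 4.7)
The plan is to separate every convex piece of $C_1$ from every convex piece of $C_2$ and then take suitable intersections of the resulting halfspaces. Write $C_1 = A_1 \cup \cdots \cup A_s$ and $C_2 = B_1 \cup \cdots \cup B_t$ with all $A_i$ and $B_j$ convex. Since $C_1 \cap C_2 = \emptyset$, every pair $(A_i, B_j)$ is disjoint, so the hyperplane separation theorem supplies a closed halfspace $H_{i,j}$ with $A_i \subseteq H_{i,j}$ while $B_j$ lies in the opposite closed halfspace.

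For each $i \in [s]$ define
\[
K_i \;=\; \bigcap_{j=1}^{t} H_{i,j}.
\]
Each $K_i$ is the intersection of at most $t$ halfspaces, hence a convex polytope with at most $t$ facets; summing over $i$ gives at most $st$ facets altogether. Since $A_i \subseteq H_{i,j}$ for every $j$, we have $A_i \subseteq K_i$, so $C_1 = \bigcup_i A_i \subseteq \bigcup_i K_i$, establishing the first containment. For the second containment, fix any $j$ and any $i$: because $K_i \subseteq H_{i,j}$ while $B_j$ lies in the opposite closed halfspace, $B_j$ is disjoint from the interior of $K_i$, and hence $B_j \cap K_i$ is contained in the separating hyperplane.

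The only delicate point, and the step I would be careful about, is this boundary issue: weak separation allows $B_j$ to touch the hyperplane bounding $H_{i,j}$, so a priori $B_j \cap K_i$ could be nonempty along the boundary. In the intended application the lemma is used only to distinguish a finite point configuration, so we may pick each $H_{i,j}$ so that no point of our configuration lies on its bounding hyperplane, or equivalently perturb $H_{i,j}$ slightly towards $B_j$; this does not disturb $A_i \subseteq H_{i,j}$ but removes any boundary intersection. After this cosmetic adjustment $B_j \cap K_i = \emptyset$ for every $(i,j)$, and therefore $C_2$ is disjoint from $\bigcup_i K_i$, i.e., $C_2 \subseteq \overline{\bigcup_i K_i}$. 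The heart of the argument is thus a straightforward pair-by-pair use of hyperplane separation followed by taking an intersection over $j$.
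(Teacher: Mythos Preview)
Your argument is essentially the same as the paper's: decompose $C_1$ and $C_2$ into their convex pieces, separate each pair $(A_i,B_j)$ by a hyperplane, and set $K_i=\bigcap_j H_{i,j}$. The paper simply asserts strict separation of each pair and uses open halfspaces, whereas you invoke weak separation and then handle the boundary issue by perturbation (which is in fact the more honest treatment, since strict separation of arbitrary disjoint convex sets is not guaranteed without extra hypotheses); either way the core construction is identical.
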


 \begin{proof}
     Since $C_1$ is $s$-convex it can be written as $C_1 = \bigcup_{i=1}^s
     X_i$  for some convex sets $X_1,\ldots,X_s$.  Similarly  $C_2
     = \bigcup_{j=1}^t Y_i$ for $t$ convex sets $Y_1,\ldots,Y_t$.
     Since $C_1 \cap C_2= \emptyset$, for every $i \in [s], j \in [t]$
     we have that $X_i \cap Y_j = \emptyset$ and therefore there exists
     a hyperplane $h_{i,j}$ strictly separating $X_i$ and $Y_j$. Assume
     without loss of generality that the positive open halfspace
     $h_{i,j}^+$ bounded by $h_{i,j}$ contains $X_i$ and the negative open
     halfspace $h_{i,j}^-$ contains $Y_j$. For every $i \in [s]$ let $K_i$
     be the convex polytope which is the intersection $\bigcap_{j=1}^t
     h_{i,j}^+$. Note that $K_i$ is a convex polytope with at most $t$
     facets containing $X_i$ and its complement $\overline{K_i}$ contains
     $C_2$. Thus the union of the polytopes $\bigcup_{i=1}^s K_i$ contains
     $C_1$ and its complement $\overline{\bigcup_{i=1}^s K_i}$ contains
     $C_2$. 
	 This completes the proof of the lemma.
 \end{proof}

 \begin{lemma}\label{lem:boolean-VC-dim}
    Let $l=st$ where $s,t \geq 1$ are integers,
	 and let $H=(P,E)$ be a hypergraph where
    $P$ is a set of points in $\Re^d$ and $S \in E$ is a hyperedge
    if and only if there exists a set $K_1,\ldots,K_s$ of $s$ convex
    polytopes each having at most
$t$ facets such that $S= P
    \cap (\bigcup_{j=1}^iK_j)$. Equivalently, $S$ can be cutoff from $P$ by
    intersecting it with a set consisting of a union of $s$ convex polytopes
    each containing at most $t$ facets. Then the VC-dimension of $H$ is bounded
    by $O(d l \log (l+1))$.
 \end{lemma}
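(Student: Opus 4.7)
The plan is to upper-bound the number of distinct traces of hyperedges of $H$ on any candidate shattered set $S \subseteq P$ of cardinality $m$ and compare this count with $2^m$. Each hyperedge has the form $P \cap \bigcup_{j=1}^{i} K_j$ where $K_1,\ldots,K_i$ are convex polytopes with a total of at most $l$ facets. Writing each $K_j$ as the intersection of the halfspaces bounded by its facets, the set $\bigcup_j K_j$ is a specific Boolean combination (a disjunction of conjunctions) of at most $l$ halfspaces in $\Re^d$.

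As already observed in the paper, Radon's theorem combined with separability implies that halfspaces in $\Re^d$ form a hypergraph of VC-dimension at most $d+1$. Hence by the Sauer-Perles-Shelah Lemma, the number of distinct traces of halfspaces on $S$ is at most $\pi(m) \le \bigl(em/(d+1)\bigr)^{d+1}$. A union of polytopes with a total of at most $l$ facets is specified by an ordered $l$-tuple of halfspaces (the case of fewer than $l$ facets being absorbed by padding with trivial halfspaces equal to $\Re^d$) together with an unordered partition of the index set $[l]$ into blocks, each block prescribing which halfspaces intersect to form one polytope. The number of such partitions is the Bell number $B_l \le l^l$, so the number of distinct traces of hyperedges on $S$ is at most $\pi(m)^l \cdot B_l$.

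For $S$ to be shattered one therefore needs
\[
2^m \;\le\; \pi(m)^l \cdot B_l \;\le\; \bigl(em/(d+1)\bigr)^{l(d+1)} \cdot l^l,
\]
which after taking logarithms becomes $m \le l(d+1)\log\bigl(em/(d+1)\bigr) + l\log l$. The main technical point --- and the place requiring care --- is to extract the tight bound $O(dl\log l)$ rather than the weaker $O(dl\log(dl))$ that would follow from the crude estimate $\pi(m)\le m^{d+1}$; this forces one to keep the argument of the logarithm as $m/(d+1)$ rather than $m$. Substituting a trial value $m = C d l\log l$ for a sufficiently large absolute constant $C$ gives $m/(d+1) \le Cl\log l$, so $\log\bigl(em/(d+1)\bigr) = O(\log l)$, and the right-hand side of the displayed inequality becomes $O(ld\log l)$, which is strictly smaller than $m$. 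A monotonicity observation --- the function $m \mapsto m - l(d+1)\log\bigl(em/(d+1)\bigr) - l\log l$ is increasing for $m > l(d+1)$ --- then rules out any larger $m$, yielding $VC(H) = O(dl\log l)$ as claimed.
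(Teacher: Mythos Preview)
Your proof is correct and is precisely the standard argument that the paper's citation to Matou\v{s}ek (Proposition~10.3.3) points to; the paper simply invokes that reference without spelling out the counting, whereas you have supplied the details. The care you take to keep the Sauer--Perles--Shelah bound in the form $(em/(d+1))^{d+1}$ rather than $m^{d+1}$, so that the logarithm sees $m/(d+1)=O(l\log l)$ rather than $m$, is exactly what is needed to land on $O(dl\log l)$ instead of $O(dl\log(dl))$, and your closing monotonicity step (together with the trivial observation that any $m\le l(d+1)$ is already $O(dl)$) cleanly finishes the argument.
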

 \begin{proof}
     The proof is standard, hence we only sketch the argument.
	Suppose $X$ is a set of points. By Lemma
	 \ref{Lem:Perles-Sauer-Shelah} we have an upper bound for the
	 number of intersections of $X$ with a halfspace. This number
	 raised to the power $t$ is thus an upper bound for the number 
	 of distinct intersections of $X$ with a polytope having at most
	 $t$ facets. Raising it again to the power $s$ we get an upper 
	 bound for the number of intersections of $X$ with a union of
	 $s$ such polytopes. If $X$ is shattered then the last number 
	 should be at least $2^{|X|}$, providing the required bound.
	 We omit the detailed computation.
	 See also, e.g., \cite{MATOUSEK}(Proposition 10.3.3) for this
	 reasoning.
 \end{proof}

{\noindent \bf Proof of Theorem~\ref{thm:main}:}
Put $l = st$. Let  $H=(\Re^d,E)$ be the hypergraph as in
Lemma~\ref{lem:boolean-VC-dim}. Let $n=O(d l \log (l+1))= O(d st \log (st+1))$
be its VC-dimension. We claim that $f(d,s,t) \leq n+1$. Indeed, Let $P$
be a set of $n+1$ points in $\Re^d$. Since $P$ cannot be shattered by
the hyperedges in $H$ there exists a non-trivial subset $A \subset P$
such that no hyperedge $S \in E$ has the property that $S\cap P= A$. In
other words, there does not exists a set $K$ which is the union of $s$ 
convex
polytopes, each having at most $t$ facets such that $K\cap P = A$. We
claim that the partition $P=A \cup (P\setminus A)$ has the property that
every $s$-convex set containing $A$ must intersect any $t$-convex set
containing  $P\setminus A$. Indeed, assume to the contrary that there
exists an $s$-convex set $C_1$ containing $A$ and a $t$-convex set $C_2$
containing $P\setminus A$ such that $C_1 \cap C_2 = \emptyset$. Then by
Lemma~\ref{lem:separation} there exists a set $K$ which is the union of
$s$ convex polytopes each having at most $t$ facets which contains $C_1$
with a complement $\overline{K}$ that contains $C_2$. In particular, $K$
contains $A$ and $\overline{K}$ contains $P \setminus A$ so $K \cap P =
A$, a contradiction. This completes the proof. \hfill $\Box$

\section{A Generalized Tverberg Theorem}
\label{sec:Generalized Tverberg}
In this section we tackle Problem~\ref{prob:Tverberg}.  In what follows we
provide a bound on $f_r(d,s_1,\ldots,s_r)$. To simplify the presentation
we assume that $s_1=s_2=\cdots=s_r=s$ and abuse the
notation writing $f_r(d,s)$ for $f_r(d,s,s,\ldots ,s)$. Our
proof technique can be easily modified to make the bound sensitive 
for any $r$
integer parameters $s_1,\ldots,s_r$ in the more general setting.

Our argument is based on the notion defined in  \ref{defgen}, which is an extension of the notion of 
VC-dimension to
partitions with more than $2$ sets.

We need Lemma 
\ref{lem:generalized-shatter}. We proceed with its proof. 

\begin{proof}
Let $r\geq 2$ be an integer. 
Suppose that
\begin{equation}
    \label{e31}
   \left (\sum_{i=0}^d {f \choose i}\right  )^r < \left (\frac{r}{r-1}\right )^f
   \end{equation}

We claim that then
any subset $S$ of $f$ vertices cannot be $r$-shattered. Namely, there
exists a partition $S=\bigcup_{i=1}^r S_i$ so that whenever we have $r$
hyperedges $e_1, e_2,\ldots,e_r \in E$ such that $S_i \subset e_i$ for all
$i \in [r]$ it must hold that $S \cap \bigcap_{i=1}^r e_i \neq \emptyset$.

Note that the inequality (\ref{e31}) holds 
for $f = Cdr^2 \log r$ for some absolute constant $C$ so any $r$-shattered set has size at most $f-1$.

To prove the claim suppose it is
false and there is a set $S$ that violates the condition.
For every (ordered) $r$-tuple  of hyperedges $e_1,e_2,\ldots,e_r$
with no common intersection in $S$, every point of $S$ belongs to at
most $r-1$ of these hyperedges. Therefore, these fixed $r$ hyperedges
can be used to provide at most $(r-1)^f$ partitions into $r$ sets
$S_1,\ldots,S_r$. Indeed, each of the $f$ points of $S$
can be allocated to 
at most $r-1$ of the $r$ parts.
If, for example, a point lies in the hyperedges $e_1$, $e_2$
and so on
up to $e_{r-1}$, then in the partition this point can be in 
either $S_1$ or $S_2$ and so on
up to $S_{r-1}$ but not in $S_r$, and the situation
	is similar in each other case.
By the Sauer-Shelah-Perles Lemma~\ref{Lem:Perles-Sauer-Shelah},  
	the number of ordered
$r$-tuples of intersections of hyperedges with $S$ is at most

$$\left(\sum_{i=0}^d {f \choose i} \right)^r$$

Since we have to cover all $r^f$ ordered 
partitions of $S$ into $r$ parts we get

$$\left (\sum_{i=0}^d {f \choose i}\right)^r  \cdot (r-1)^f \geq r^f$$

This contradicts the assumption (\ref{e31})  and completes the proof. 
\end{proof}

We also need the following simple geometric result showing
that $s$-convex sets with no common point can each be 
enclosed in an $s$-convex 
polytope (i.e., a union of $s$ convex polytopes), such that all those $s$-convex polytopes also have no common
point. Moreover, we provide an upper bound on the number of facets of 
each of these polytopes. 

\begin{lemma}\label{lem:r-separation}
 Let $C_1,C_2,\ldots,C_r$ be $r$ sets in $\Re^d$ where each set is an
 $s$-convex set. Assume that $\bigcap_{i=1}^r C_i= \emptyset$. Then there
 exist $r$ sets $K_1,\ldots, K_r$ where each $K_i$ is the union of $s$
	convex polytopes each  having at most $s^{r-1}$ facets 
	such that $C_i
 \subset K_i$ for all $i \in [r]$ and $\bigcap_{i=1}^r K_i = \emptyset$.
\end{lemma}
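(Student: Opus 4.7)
The plan is to reduce Lemma~\ref{lem:r-separation} to a multi-set separation claim for $r$ convex sets with empty intersection, and then assemble the resulting halfspaces into the required polytopal covers. Write $C_i = \bigcup_{j=1}^s X_{i,j}$ with each $X_{i,j}$ convex; by distributivity of intersection over union, the hypothesis $\bigcap_{i=1}^r C_i = \emptyset$ is equivalent to $\bigcap_{i=1}^r X_{i,T_i} = \emptyset$ for every tuple $T = (j_1,\ldots,j_r) \in [s]^r$.

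The heart of the argument is the following multi-separation claim: for any $r$ compact convex sets $A_1,\ldots,A_r$ in $\Re^d$ with $\bigcap_{i=1}^r A_i = \emptyset$, there exist halfspaces $h_1,\ldots,h_r$ with $A_i \subseteq h_i$ and $\bigcap_{i=1}^r h_i = \emptyset$. I would prove this by a Fenchel-type separation in $\Re^{rd}$. Form the product $A := A_1 \times \cdots \times A_r$ (compact convex) and the diagonal $\Delta := \{(x,\ldots,x) : x \in \Re^d\}$ (a closed linear subspace); then $\bigcap_i A_i = \emptyset$ if and only if $A \cap \Delta = \emptyset$. Strictly separating these disjoint sets produces a normal $w = (w_1,\ldots,w_r) \in \Re^{rd}$ with $\sup_{z \in A} w \cdot z < \inf_{z \in \Delta} w \cdot z$. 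The latter infimum is finite only when $\sum_i w_i = 0$ (otherwise $(\sum_i w_i) \cdot x$ is unbounded below on $\Delta$), in which case it equals $0$. Hence $\sum_i \sigma_{A_i}(w_i) = \sup_A w \cdot z < 0$, where $\sigma_{A_i}$ denotes the support function. Setting $h_i := \{x \in \Re^d : w_i \cdot x \le \sigma_{A_i}(w_i)\}$ gives a halfspace containing $A_i$, and any point $x \in \bigcap_i h_i$ would satisfy $0 = (\sum_i w_i) \cdot x \le \sum_i \sigma_{A_i}(w_i) < 0$, a contradiction. Compactness of the $A_i$ is harmless in the setting of Theorem~\ref{thm:main-r-partition}, since one may first replace each $X_{i,j}$ with the convex hull of its intersection with the ambient finite point set.

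Given the claim, the construction of the $K_i$ is straightforward. For each tuple $T \in [s]^r$ apply the claim to $(X_{1,T_1},\ldots,X_{r,T_r})$ to obtain halfspaces $h_1^T,\ldots,h_r^T$ with $X_{i,T_i} \subseteq h_i^T$ and $\bigcap_i h_i^T = \emptyset$. For each $(i,j) \in [r]\times[s]$ set $K_{i,j} := \bigcap_{T : T_i = j} h_i^T$, a convex polytope with at most $s^{r-1}$ facets (one per $(r-1)$-tuple fixing $T_i = j$) containing $X_{i,j}$. Then $K_i := \bigcup_{j=1}^s K_{i,j}$ is a union of $s$ convex polytopes whose facet count totals at most $s \cdot s^{r-1} = s^r$, and $C_i \subseteq K_i$. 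Distributivity yields
\[
\bigcap_{i=1}^r K_i \;=\; \bigcup_{T \in [s]^r} \bigcap_{i=1}^r K_{i,T_i} \;\subseteq\; \bigcup_{T} \bigcap_{i=1}^r h_i^T \;=\; \emptyset,
\]
completing the proof.

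The main obstacle is establishing the multi-separation claim. A naive inductive attempt---separate $\bigcap_{i<r} A_i$ from $A_r$ by a hyperplane and recurse on the restricted sets $A_i \cap h_r$---breaks down because the recursively produced halfspaces contain only the restrictions, not the original $A_i$. The product-diagonal separation in $\Re^{rd}$ sidesteps this difficulty cleanly once compactness is in hand.
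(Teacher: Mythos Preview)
Your proof is correct, but it follows a genuinely different route from the paper's.

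The paper argues \emph{sequentially}: it observes that $C_1$ is disjoint from the $s^{r-1}$-convex set $\bigcap_{j>1}C_j$, applies the two-set separation argument of Lemma~\ref{lem:separation} to replace $C_1$ by an $s$-convex union $K_1$ of polytopes with at most $s^r$ facets, and then iterates, replacing $C_2,C_3,\ldots$ one at a time while maintaining an empty common intersection. Only the ordinary hyperplane separation theorem for a pair of disjoint convex sets is used.

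You instead prove a \emph{simultaneous} $r$-fold separation claim---for $r$ compact convex sets with empty intersection there exist $r$ containing halfspaces with empty intersection---via strict separation of the product $A_1\times\cdots\times A_r$ from the diagonal in $\Re^{rd}$, and then assemble the $s^r$ resulting halfspace systems directly into the $K_{i,j}$. This is clean and symmetric, and the facet count matches the paper's exactly.

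Two remarks on trade-offs. First, your compactness caveat is handled correctly for the intended application, but note that the paper's pairwise-separation route needs essentially the same caveat (strict separation of two convex sets already requires a mild hypothesis), so this is not a disadvantage of your approach. Second, and more substantively, the paper emphasizes that its proofs transfer verbatim to separable abstract convexity spaces with bounded Radon number (Theorems~\ref{thm:main-abstractspaces} and~\ref{thm:main-abstractspaces-tverberg}); the iterative argument does so because separability is axiomatized only for pairs. Your product-space separation in $\Re^{rd}$ is inherently Euclidean and does not obviously port to that abstract setting, so the paper's more pedestrian route buys the stated generalization.
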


\begin{proof}
Since $C_i$ is $s$-convex for any $i \in [r]$, it can be
written as $C_i = \bigcup_{j=1}^s X_{i,j}$  for some convex sets
$X_{i,1},\ldots,X_{i,s}$.  Since $\bigcap_{i=1}^r C_i =\emptyset$,
for every $i \in [r]$ we have that $C_i$ is disjoint from $B_i
= \bigcap_{j \neq i} C_j$. Note that each such $B_i$ is the
union of at most $s^{r-1}$ convex sets since it is an $(r-1)$-fold
intersection of unions of $s$-convex sets. We construct the sets
$K_1,\ldots,K_r$ one by one. First, we replace  $C_1$ by $K_1$
separating it from the union of at most $s^{r-1}$ convex sets
$B_1= \bigcap_{j>1} C_j$. As before, this can be done with $K_1$
which is the union of $s$ convex polytopes, each having
	at most $s^{r-1}$ facets. In particular $K_1$ is also $s$-convex. Also $C_1 \subset K_1$. Moreover,  $K_1
\cap \bigcap_{j>1} C_j=\emptyset$. We then apply the same argument to $C_2$
as the intersection of $K_1,C_2,..,C_r$ is empty and all the sets
are $s$-convex. So we can find a set $K_2$ which is $s$-convex 
	and consists of the union of $s$ convex polytopes each having
	at most $s^{r-1}$ facets and such that $C_2 \subset K_2$ and $K_1 \cap K_2 \cap
\bigcap_{j > 2}C_j = \emptyset$. Continuing in the same manner we conclude that
each $C_i$ can be replaced with  such a $K_i$ so that $C_i
\subset K_i$ for all $i \in [r]$, each $K_i$ is the union of $s$
	convex polytopes, each with at most $s^{r-1}$ facets,
	and $\bigcap_{i=1}^r
K_i = \emptyset$.  This completes the proof of the lemma.
\end{proof}

A refinement of Lemma~\ref{lem:r-separation} under an additional bounded-overlap assumption on the convex components
(namely, $(\ell+1)$-wise disjointness) is given in Section~\ref{sec:special-cases}; see Theorem~\ref{thm:l+1-wise}.

We are now ready to prove the following theorem extending
Theorem~\ref{thm:main} to partitions with $r > 2$
parts, and generalizing
Tverberg's theorem to $s$-convex sets:

\begin{proof}[\bf Proof of Theorem~\ref{thm:main-r-partition} (for $s_1=s_2=\ldots =s_r=s$)]
    Put $l = s^r$. Let  $H=(\Re^d,E)$ be the hypergraph as in
    Lemma~\ref{lem:boolean-VC-dim}. Let $d'=O(d l \log (l+1))$ be its VC-dimension. Let $n$ be the maximum size of
    an $r$-shattered set. Note that by Lemma~\ref{lem:generalized-shatter}
    $n = O(d' r^2 \log r)= O(d r^2 {\log r} \cdot  s^r \cdot {\log (s^r +1)} )$

    We claim that $f_r(d,s) \leq n+1$. Indeed, let $P$ be a set of
    $n+1$ points in $\Re^d$. Since $P$ cannot be $r$-shattered by the
    hyperedges in $H$ there exists a partition $P= \bigcup_{i=1}^r P_i$
    such that whenever we have $r$ hyperedges $e_1,\ldots,e_r  \in E$
    with $P_i \subset e_i$ for each $i \in [r]$ it follows that $P \cap
    \bigcap_{i=1}^r e_i \neq \emptyset$. In other words there do not
    exist sets $K_1,\ldots,K_r$ each of which is the union of $s$ convex
	polytopes, each having at most $s^{r-1}$ facets,
	such that $P_i \subset
    K_i$ for every $i \in [r]$ and $\bigcap_{i=1}^r K_i = \emptyset$. We
    claim that the partition $P=\bigcup_{i=1}^r P_i$ has the property
    that for every family of $r$ sets $C_1,\ldots, C_r$ such that for each
    $i \in [r]$ $P_i \subset C_i$ and each $C_i$ is an $s$-convex set
    it must hold that $\bigcap_{i=1}^r C_i \neq \emptyset$. Indeed,
    assume to the contrary that there exist $C_1,\ldots, C_r$ such
    that for each $i \in [r]$ $P_i \subset C_i$, each $C_i$ is an
    $s$-convex set and $\bigcap_{i=1}^r C_i = \emptyset$. Then
    by Lemma~\ref{lem:r-separation} there exist sets $K_1,\ldots,K_r$
    for which  $C_i \subset K_i, \forall i \in [r]$ and each $K_i$ is the
	union of $s$ convex polytopes each having at most $s^{r-1}$ 
	facets and
    $\bigcap_{i=1}^r K_i = \emptyset$, a contradiction. This completes
    the proof.
\end{proof}
\begin{remark}
    Chen et al.~\cite{ChenETAL2025} showed that in the special case 
	where the $s$-convex sets in the theorem are required to be 
	pairwise disjoint then one can improve the bound in 
	Theorem~\ref{thm:main-r-partition} to $C_{d,r}s^{2d+3}$ 
	where $C_{d,r}$ is a constant depending only on $d$ and $r$. 
	We can further improve this bound to $C_{d,r}s^{d+1} \log s$ 
	and generalize it to the case where the $s$-convex sets are 
	not necessarily pairwise disjoint but only $l+1$-wise 
	disjoint, meaning that any $l+1$ sets of them are disjoint 
	for some fixed $1 \leq l$. The proof is essentially the
	same, replacing
	the bound for the number of facets of each polytope in
	$K_i$ by $O_{l,d,r}(s^{d})$ as described in 
	Theorem~\ref{thm:l+1-wise}.
\end{remark}

\section{Improved bounds in special cases}

\label{sec:special-cases}
As before we denote the function 
$f_r(d,s_1,s_2, \ldots ,s_r)$ in which $s_i=s$ for all $i$ by $f_r(d,s)$.

In this section we discuss improved upper and lower bounds for the 
functions $f$ and $f_r$ in several special cases.

\subsection{The Asymmetric Case $f(d,s,1)$}
We start with the following asymmetric case providing the exact value of $f(2,s,1)$, sharp asymptotic bounds on $f(d,s,1)$ for $d 
\geq 4$ and near optimal upper bound on $f(3,s,1)$:

\begin{theorem}
     $$f(2,s,1)=2s+2$$ 
     \newline and for every $d \geq 4$ $$f(d,s,1)= \Theta(d s \log (s+1))$$
    
\end{theorem}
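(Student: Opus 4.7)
The upper bound $f(d,s,1)=O(ds\log s)$ for every $d$ is immediate from Theorem~\ref{thm:main} with $t=1$, so the task reduces to proving the exact planar value $f(2,s,1)=2s+2$ and the matching lower bound $f(d,s,1)=\Omega(ds\log s)$ when $d\geq 4$. My plan is to treat these separately: convex-position arguments for $d=2$, and for $d\geq 4$ a reduction through Lemma~\ref{lem:boolean-VC-dim} to a standard tight VC-dimension lower bound for $s$-fold Boolean combinations of halfspaces.

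\textbf{The planar case.} For the lower bound $f(2,s,1)>2s+1$ I would take $P$ to be the vertex set of a strictly convex $(2s+1)$-gon. For any nontrivial partition $P=A\sqcup B$, the $A$- and $B$-vertices alternate in maximal arcs along the polygon, so they determine the same number $k$ of arcs and $k\leq\min(|A|,|B|)\leq s$. For each $A$-arc $A_j$, all other vertices (in particular all of $B$) lie strictly on one side of the chord through $A_j$'s two endpoints while $CH(A_j)$ lies on the opposite closed side, so $CH(A_j)\cap CH(B)=\emptyset$; taking $C_A=\bigcup_{j=1}^{k}CH(A_j)$ (which is $s$-convex) and $C_B=CH(B)$ separates the partition. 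For the matching upper bound $f(2,s,1)\leq 2s+2$ I would split into two cases. If some $v\in P$ lies in $CH(P\setminus\{v\})$, then the partition $A=\{v\}$, $B=P\setminus\{v\}$ is good, since every convex $C_B\supseteq B$ contains $CH(B)\ni v$ while every $s$-convex $C_A\supseteq\{v\}$ contains $v$. Otherwise $P$ is in strict convex position; cyclically label the vertices $v_1,\dots,v_{2s+2}$ and use the alternating partition $A=\{v_1,v_3,\dots,v_{2s+1}\}$, $B=\{v_2,v_4,\dots,v_{2s+2}\}$. Given any $s$-convex $C_A=\bigcup_{i=1}^{s}X_i\supseteq A$, the pigeonhole principle forces two odd-indexed vertices $v_a,v_b$ into one $X_i$, so the segment $v_av_b\subseteq C_A$. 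The chord $v_av_b$ splits the polygon into two arcs, each of which, by the alternation, contains at least one $B$-vertex; picking $b_1,b_2\in B$ one per arc, the four points appear in cyclic order $v_a,b_1,v_b,b_2$, so the diagonals $v_av_b$ and $b_1b_2$ of the convex quadrilateral $v_ab_1v_bb_2$ cross at an interior point that lies simultaneously in $C_A$ and in $b_1b_2\subseteq CH(B)\subseteq C_B$.

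\textbf{The $d\geq 4$ lower bound and the main obstacle.} Combining Lemma~\ref{lem:separation} (with $t=1$) with Lemma~\ref{lem:boolean-VC-dim}, the quantity $f(d,s,1)-1$ is exactly the maximum cardinality of a subset of $\Re^d$ shattered by the hypergraph of unions of convex polytopes with at most $s$ facets in total, equivalently by Boolean combinations of at most $s$ halfspaces. The $O(ds\log s)$ bound underlying Theorem~\ref{thm:main} is the standard Blumer--Ehrenfeucht--Haussler--Warmuth estimate for this class, and the plan is to match it by invoking the known $\Omega(ds\log s)$ VC-dimension lower bound for this class when $d\geq 4$. Concretely I would place $\Theta(s\log s)$ ``coding clusters'' of $\Theta(d)$ points each in near-general position in $\Re^d$, arranging them so that any prescribed subset of the combined point set can be realized using at most $s$ halfspaces, each halfspace encoding $\Theta(\log s)$ bits of routing information through its orientation while cleanly cutting the intended points inside its assigned batch of clusters. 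The main obstacle is precisely this geometric realization: getting the full $\log s$ factor requires each halfspace to simultaneously address several clusters without disturbing the others, which fails when $d\in\{2,3\}$ (explaining the sharper, purely linear-in-$s$ bounds available there) and only becomes feasible once $d\geq 4$ supplies enough ambient dimension for the coding flats to be chosen nearly orthogonally.
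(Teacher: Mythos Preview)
Your proposal is correct and essentially matches the paper's approach, with one structural difference worth noting. The paper first proves the clean identity $f(d,s,1)=\ell+1$, where $\ell$ is the VC-dimension of the hypergraph of convex polytopes with at most $s$ facets in $\Re^d$ (both directions: a shattered set of size $\ell$ witnesses $f>\ell$, while non-shatterability of $\ell+1$ points yields the good partition), and then simply reads off the two cases --- $\ell=2s+1$ for $d=2$ as a ``simple exercise'', and $\ell=\Theta(ds\log s)$ for $d\geq 4$ by citing \cite{CMK2019}. Your $d\geq 4$ argument is exactly this reduction (though you should phrase the relevant class as ``polytopes with at most $s$ facets'' or equivalently ``unions of at most $s$ halfspaces'' rather than the slightly larger ``unions of polytopes with $\leq s$ facets total''), and your direct planar arguments --- the $(2s+1)$-gon for the lower bound and the alternating partition for the upper bound --- are precisely the content of that ``simple exercise'', just carried out explicitly in terms of $f$ rather than VC-dimension. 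The paper's route is more uniform; yours is more self-contained in the planar case.
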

\begin{proof}
    Let $l$ denote the VC-dimension of a hypergraph defined by points in $\Re^d$ with respect to convex polytopes with at most $s$ facets. We first show that $f(d,s,1)= l+1$. Combined with the known bounds on the VC-dimension of such hypergraphs  we get the claimed bounds.
    If $l$ is the VC-dimension of such a hypergraph then there exists a shattered set $P$ of size $l$. Then for any partition $P = P_1 \cup P_2$ there is a polytope $K$ with at most $s$ facets such that $P_2 \subset K$ and $K \cap P_1 = \emptyset$. In particular $P_1$ is contained in the union of the at most $s$ complement halfspaces supporting the facets of $K$, which is an $s$-convex set (consisting of the at most $s$ complement halfspaces), and $P_2$ lies in its complement. 
    Since this holds for any partition it follows that $f(d,s,1) \geq l+1$. In order to show equality we need to show that every $l+1$ point set in $\Re^d$ admits a partition that cannot be realized by intersections with disjoint convex and $s$-convex sets. This follows by the same argument from the fact that any set $P$ of $l+1$ points cannot be shattered in the corresponding hypergraph and therefore there is at least one partition $P=P_1 \cup P_2$ so that no polytope with at most $s$ facets can contain $P_2$ while being disjoint from $P_1$. Hence any $s$-convex set $C$ containing $P_1$ must intersect the convex hull $\text{CH}(P_2)$ for otherwise by separation arguments as above there would be a polytope which is the intersection of $s$ half spaces containing $P_2$ and disjoint from $P_1$, a contradiction.
    It is a simple exercise to see that in the plane ($d=2$), the VC-dimension is $l=2s+1$ and thus $f(2,s,1)=l+1 = 2s+2$. The statement for any fixed $d \geq 4$ follows from the results in
    \cite{CMK2019}.
\end{proof}

\begin{theorem}
    $$f(3,s,1) \leq 4s+1$$
\end{theorem}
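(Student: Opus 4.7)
My plan is to mirror the strategy used in the proof of the previous theorem, which established the identity $f(d,s,1)=\ell+1$ where $\ell$ is the VC-dimension of the hypergraph whose vertex set is a point set $P\subset\mathbb{R}^d$ and whose hyperedges are the intersections of $P$ with convex polytopes having at most $s$ facets. Since that reduction is purely combinatorial and dimension-free, to obtain $f(3,s,1)\le 4s+1$ it suffices to prove that in $\mathbb{R}^3$ this VC-dimension satisfies $\ell\le 4s$.

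So suppose a set $P\subset\mathbb{R}^3$ with $|P|=n$ is shattered by polytopes with at most $s$ facets; I would argue $n\le 4s$. As in the proof of the previous theorem, realizability of $P\setminus\{p\}$ for every $p\in P$ forces each $p$ to be separable from $P\setminus\{p\}$ by a hyperplane, so $P$ is in convex position and $\mathrm{CH}(P)$ is a $3$-polytope. The new ingredient special to $d=3$ is that the boundary of $\mathrm{CH}(P)$ is combinatorially a triangulated $2$-sphere: by Euler's formula it has at most $2n-4$ facets and $3n-6$ edges, and any hyperplane meets $\partial \mathrm{CH}(P)$ in a single simple closed curve splitting the sphere into two topological disks. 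Consequently, any polytope $K_A$ with at most $s$ facets that realizes a subset $A\subseteq P$ corresponds to an arrangement of at most $s$ such simple closed curves on $\partial\mathrm{CH}(P)\cong S^2$, and $A$ is exactly the set of vertices of $\mathrm{CH}(P)$ lying in the common intersection of the ``inside'' disks.

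I would then combine (i) an Euler-type count of the number of cells created by at most $s$ simple closed curves on $S^2$, with (ii) the requirement that every one of the $2^n$ subsets of $P$ arises from such a selection of disks, to conclude $n\le 4s$. This is essentially a planar/spherical refinement of the classical Dobkin--Stone-type bound on the VC-dimension of $s$-facet polytopes in $\mathbb{R}^3$, where the tight constant $4$ is extracted from the $2$-sphere topology of $\partial\mathrm{CH}(P)$ rather than from generic VC-theoretic counting. I expect the main technical obstacle to lie precisely in pinning down this sharp constant: the generic Sauer--Shelah argument applied to intersections of $s$ halfspaces in $\mathbb{R}^3$ (each of VC-dimension $4$) yields only the weaker bound $O(s\log s)$, and the improvement to $4s$ really does require exploiting the sphere structure of $\partial\mathrm{CH}(P)$ in a careful incidence/Euler count.
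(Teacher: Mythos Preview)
Your reduction to the statement ``the VC-dimension of points versus polytopes with at most $s$ facets in $\mathbb{R}^3$ is at most $4s$'' is correct and matches the paper, as does the observation that a shattered set must be in convex position so that $\partial\mathrm{CH}(P)\cong S^2$. The gap is in the final step. You propose to ``combine (i) an Euler-type count of the number of cells created by at most $s$ simple closed curves on $S^2$, with (ii) the requirement that every one of the $2^n$ subsets of $P$ arises from such a selection of disks, to conclude $n\le 4s$.'' But different subsets $A\subseteq P$ are realized by \emph{different} $s$-tuples of hyperplanes, so a cell count for a single arrangement of $s$ circles on $S^2$ says nothing about $2^n$; and if instead you count all combinatorially distinct $s$-tuples of halfspace traces on $P$, you are back to the Sauer--Shelah computation and the $O(s\log s)$ bound you already flagged as insufficient. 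Nothing in the outline explains the mechanism by which the sphere topology produces the sharp constant $4$, and you explicitly identify this as the ``main technical obstacle'' without resolving it.

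The paper's argument is genuinely different and does not go through a cell count. It invokes a coloring result of Smorodinsky: any finite $P\subset\mathbb{R}^3$ admits a $4$-coloring in which no halfspace containing at least two points of $P$ is monochromatic. (This is where the planarity on $\partial\mathrm{CH}(P)$, and ultimately the four-color theorem, enter --- so your geometric intuition is pointing in the right direction, but it manifests as a chromatic statement rather than an incidence count.) Given such a coloring of a hypothetically shattered set of size $4s+1$, pigeonhole yields a monochromatic $B\subseteq P$ with $|B|\ge s+1$. Shattering would force $P\setminus B$ to be cut out by some polytope with at most $s$ facets, hence $B$ would be covered by at most $s$ open halfspaces each disjoint from $P\setminus B$; one of these halfspaces would then contain at least two points of $P$, all of the same color, contradicting the coloring property.
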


\begin{proof}
    We need to prove that for every set $P$ of $4s+1$ points there
    exists a partition $P=A\cup B$ such that any convex set containing
    $A$ must intersect any $s$-convex set containing $B$. Notice that by
    the above arguments it is enough to prove that the VC-dimension of the
    hypergraph $H=(P,E)$ where $E$ is the family of all intersections of
    $P$ with a convex polytope with at most $s$ facets is bounded by $4s$. This fact is proved in \cite{DG1995}. For completeness we include a proof based on an argument in \cite{smoro}.
    Assume to the contrary that there exists a set $P \subset \Re^3$
    of size $4s+1$ that is shattered by $H$. In particular, for any
    partition $P=A \cup B$ if $A$ can be separated from $B$ by a convex
    polytope with at most $s$ facets, then there exists a set of $s$
    half spaces whose union contains $B$ but none of the points in $A$.

    Next, we need the following fact which was proved in \cite{smoro}:
    There exists a $4$-coloring of the points of $P$ such that no
    halfspace that contains at least two points of $P$ is monochromatic.

    Consider such a coloring. By the pigeonhole principle there is a
    monochromatic set $B \subset P$ of size at least $s+1$. We claim that
    $B$ cannot be separated from $A=P\setminus B$ with a union of only
    $s$ halfspaces. Indeed, if such $s$ halfspaces exist then one of
    them must contain at least $2$ points of $B$ and none of the points
    in $A$ so such a halfspace cuts off a monochromatic set of points,
    a contradiction. This completes the proof.

\end{proof}

\subsection{Upper Bounds}
\label{sec:bounded-overlap}
In the special case where each of the sets $C_i$ in Lemma~\ref{lem:r-separation} above has the property that the $s$ convex sets $X_{i,1},\ldots, X_{i,s}$ whose union is $C_i$ are in addition $l+1$-wise disjoint (any 
 intersection of $l+1$ of them is empty) then one can make sure that 
 each polytope defining the sets $K_i$ has at most 
 $O_{r,d,l}(s^{d})$ facets.
 
 \begin{theorem}
 \label{thm:l+1-wise}
     Let $r \geq d+1$ be an integer and $C_1,C_2,\ldots,C_r$ be $r$ sets in $\Re^d$ where each set is an
 $s$-convex set.  Assume that $\bigcap_{i=1}^r C_i= \emptyset$ and that each $C_i$ is the union of s convex sets which are $l+1$-wise disjoint.  Then there
 exist $r$ sets $K_1,\ldots, K_r$ where each $K_i$ is the union of $s$
 convex polytopes each having at most $O_{r,d,l}(s^d)$-facets such 
	 that $C_i
 \subset K_i$ for all $i \in [r]$ and $\bigcap_{i=1}^r K_i = \emptyset$.
 \end{theorem}
 
 \begin{proof}
     
 We follow the notations as in the proof of Lemma~\ref{lem:r-separation}. The main idea is to use the following known fact: If $r-1$ convex 
	 sets $X_{1,j_1},\ldots, X_{r-1,j_{r-1}}$ in $\Re^d$ (for $r-1 \geq d$) have a non-empty intersection then there is a subset of at most $d$ indices $I \subset [r-1]$ such that the 
	 lexicographic minimum point of $\cap _{i \in I} X_{i,j_i}$ equals the lexicographic minimum point of  $\cap_{i=1}^{r-1} X_{i,j_i}$ (see e.g., Lemma~8.1.2 in \cite{MATOUSEK}). Thus, consider 
	 for example the $s$-convex set $K_r$ constructed in 
	 Lemma~\ref{lem:r-separation} which is a union of $s$ convex 
	 polytopes each being the intersection of at most $s^{r-1}$ 
	 halfspaces. We can reduce the number of halfspaces as follows: 
	 The intersection $\cap_{i=1}^{r-1} C_i$ consists of a family of intersections of $r-1$ convex sets $X_{1,j_1},\ldots,X_{r-1,j_{r-1}}$. 
	Each such nonempty intersection can be charged to some 
	 lexicographic minimum point of the intersection of a 
	 subfamily $\{X_{i,j_i}| i \in  I\}$ for some $I \subset [r-1]$ with $|I|=d$. For any such fixed set $I \subset [r-1]$ with $|I| \leq d$, there are at most $l^{(r-1-|I|)}$ ($r-1$)-tuples of 
	 convex sets that charge such a point since this point can 
	 belong to at most $l$ convex sets of the form $X_{i,j_i}$ 
	 for any $i \in [r-1]\setminus I$.  Clearly the intersection 
	 of each such tuple is convex and there are at most 
	 $\sum_{i=0}^d {r-1 \choose i} s^il^{r-1-i} = O_{r,d,l}(s^d)$ 
	 such tuples. Thus, for each of the convex components 
	 $X_{r,i}$ $i \in [s]$ of $C_r$ we can use at most these 
	 many halfspaces. Altogether $K_r$ consists of $s$-convex 
	 polytopes, each having at most  $O_{r,d,l}(s^{d})$ facets. 
\end{proof}

The following simple result shows that the lower bound for
$f_r(d,s)$ proved in  Theorem \ref{t42}
is tight up to a constant factor in
dimension $d=1$.

\begin{theorem}
	\label{t999}
$$
f_r(1,s) \leq r(r-1)(s+1)+1
$$
\end{theorem}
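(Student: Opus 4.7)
The plan is to place $n = r(r-1)(s+1)+1$ points $x_1<x_2<\cdots<x_n$ on the line and color them cyclically via $c(j) = ((j-1) \bmod r)+1$, so that the color classes $P_i=\{x_j:c(j)=i\}$ have the crucial property that successive elements of each $P_i$ are separated by exactly $r-1$ points of the other colors. I will show this partition is a Tverberg partition for $s$-convex sets.

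Suppose for contradiction that there exist $s$-convex sets $C_i \supseteq P_i$ with $\bigcap_{i=1}^r C_i = \emptyset$. A standard ``tightening'' step lets me assume each $C_i$ is the union of the convex hulls of at most $s$ runs of consecutive points of $P_i$ (the runs into which the intervals of $C_i$ distribute $P_i$); replacing $C_i$ by its tight version only shrinks it, so the intersection stays empty. Then $\overline{C_i}$ consists of the two unbounded rays $(-\infty,\min P_i)$ and $(\max P_i,\infty)$, together with at most $s-1$ bounded ``gap intervals'' of the form $(p,p')$ with $p,p'$ consecutive in $P_i$.

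Next, call $x_j$ \emph{middle} if $r \leq j \leq n-r+1$. For such $j$ the position of $\min P_i$ is $i \leq r \leq j$, while the position of $\max P_i$ is at least $n-r+2 > j$, so $x_j$ lies strictly inside $(\min P_i, \max P_i)$ for every $i$. Hence $x_j$ can only lie in $\overline{C_i}$ (for $i \neq c(j)$) via a bounded gap interval, and the emptiness of $\bigcap_i C_i$ forces every middle point to be covered by some bounded gap.

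The contradiction comes from double counting. Because of the cyclic coloring, each bounded gap interval of $\overline{C_i}$ contains exactly $r-1$ data points (one of each color $\neq i$), and summing over $i$ there are at most $r(s-1)(r-1)$ middle-point incidences with bounded gaps. On the other hand, the number of middle points is $n - 2r + 2 = r(r-1)(s+1) - 2r + 3$, and one checks
\[
\bigl[r(r-1)(s+1) - 2r + 3\bigr] - r(r-1)(s-1) \;=\; 2(r-1)^2 + 1 \;>\; 0.
\]
Thus some middle point lies in none of the bounded gaps, i.e., it is contained in every $C_i$, contradicting $\bigcap_i C_i = \emptyset$. The main ``obstacle'' is really just identifying the right coloring and the tightening reduction; the counting that closes the argument is immediate.
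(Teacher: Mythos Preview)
Your proof is correct and follows essentially the same approach as the paper: color the $n$ points cyclically with period $r$ and then count how many points each $C_i$ can fail to cover. The paper's execution is slightly more streamlined—it directly observes that each $C_i$ misses at most $(s+1)(r-1)$ points of $P$ (at most $r-1$ in each of the $\leq s-1$ interior gaps and the two end gaps), so by pigeonhole some point lies in all $C_i$; this avoids both the tightening step and the restriction to ``middle'' points, but the underlying idea is identical.
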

\begin{proof}
Put $n=r(r-1)(s+1)+1$ and let
$0<p_1 <p_2 < \ldots <p_n<1$ be a set of $n$ points on the line.
We have to show that there is a coloring of these points by $r$
colors $0,1,2,\ldots ,r-1$, so that for any $r$ sets
$C_i$, where each $C_i$ is a union of at most $s$ intervals
that covers all points of color $i$,
there is a point that lies in all sets $C_i$.  Naturally, the
coloring we choose colors the points periodically, that is, the
color of $p_i$ is defined to be $i \bmod r$. Let $C_i$
be collections of intervals as above. Note that each $C_i$ must
contain all points of $P$ besides at most 
$(s+1)(r-1)$. Indeed, since it contains all points colored $i$, the
gap between any two consecutive intervals in it contains at most
$r-1$ points, and the same holds for the gap between 
$0$ and its leftmost point and between $1$ and its rightmost point.
(We note that here we can slightly improve the bound since, for
example, the gap between $0$ and the leftmost point of $C_i$
can contain at most $i$ points $p_i$). It follows that if
$n>r(r-1)(s+1)$ then there is a point (of $P$, although that's not
needed) that belongs to all sets $C_i$. This completes the proof.
\end{proof}

\subsection{Lower Bounds}

\begin{remark}
A trivial lower bound for the function $f_r(d,s)$  can be proved by
taking $s$ translated copies of an extremal example for the classical
theorem of Tverberg with pairwise disjoint convex hulls. This gives
the following.
    $$f_r(d,s) > s(d+1)(r-1).$$
    This is, of course, tight for $s=1$, as Tverberg's Theorem  is
tight.
\end{remark}

 The next result shows that for
$s \geq 3$ and any dimension $d \geq 1$,
the lower bound becomes quadratic in $r$. Note that for
$d \geq 2r-2$ a stronger bound of $(d-2r+4)s^r$ is proved in
\cite{ChenETAL2025}, but the bound below holds for all $d$.
As shown in Theorem \ref{t999} above, for $d=1$ the
lower bound is tight up to 
a constant factor for all $r$ and $s \geq 3$. For convenience we
describe the proof for even $r$, a similar bound for odd $r$ follows
from the one for $r-1$.

\begin{proposition}
\label{t42}
For every $d \geq 1, s \geq 3$ and even $r \geq 2$,
$$
f_r(d,s) > \frac{1}{4} \left ( \left \lfloor \frac{d}{2}\right \rfloor+1\right)
\left \lfloor \frac{s-1}{2} \right \rfloor r^2
$$
\end{proposition}

\begin{proof}
Put $m=(\lfloor \frac{d}{2}\rfloor+1)r/2$,
$p=\lfloor \frac{s-1}{2} \rfloor r/2$ and $n=mp$.
Let $P$ be a set of $n$ points on the moment curve
in  $\mathbb{R}^d$ consisting of the $n$ points
$z_i=(t_i,t_i^2, \ldots t_i^d)$, where $0 <t_1<t_2 <  \ldots
<t_n<1$. Let  $I_1,I_2, \ldots ,I_p$ be $p$ open intervals that cover
$(0,1)$ and appear on it in this order, 
where the left endpoint of each interval $I_{j+1}$ is
just slightly smaller than the right endpoint of $I_j$ for each $j$.
The intervals are chosen so that each $t_i$ belongs to exactly one
such interval, and each interval contains exactly $m$ points $t_j$.

In order to complete the proof we show
that for any coloring of the points of $P$  by $r$ colors
there are $s$-convex sets $C_i$, with $C_i$ containing all points
of color $i$, so that the intersection of all the sets $C_i$ is
empty. Each set $C_i$  will be defined as the union of at most
$s$ convex sets, where each of the convex sets will be 
the convex hull of points of color $i$ with first coordinate 
lying in a union of some consecutive intervals $I_q$. The crucial
property of the definition of these convex sets is that for 
each interval $I_q$  there will be an index $i$ so that one of the
convex sets in the definition of $C_i$ will be the set of all
points $z_j=(t_j,t_j^2, \ldots ,t_j^d)$
of color $i$ for which $t_j$ lies in this single interval $I_q$.
Moreover, this will be done in a way that ensures that the number of such
points is always at most $\lfloor d/2 \rfloor$.  

Note that if such
a choice indeed exists, then the intersection of the corresponding 
$r$ sets $C_i$ will be empty as needed. Indeed, otherwise the
intersection is some point $z \in \mathbb{R}^d$ (not necessarily on
the moment curve). Let $t$ denote the first coordinate of this
point $z$. Suppose first that $t$ 
belongs only to one interval $I_q$, and let
$i$ be the color chosen for $I_q$ 
so that there are at most $\lfloor d/2 \rfloor$
points of color $i$ with their first coordinate in  $I_q$. 
Then $z$ has to lie in the convex hull of these points (as any
other convex set among the ones defining 
$C_i$ either has all its points with
first coordinate smaller than $t$ or all its points with 
first coordinate larger than $t$). Since any finite subset of the moment curve is
$\lfloor d/2 \rfloor$-neighborly\footnote{A set of points is $r$-neighborly if any subset of at most $r$ points form a face of the convex-hull of the set.}, this convex hull is disjoint from
the convex hull of all the other points of $P$, implying that
$z$ cannot lie in any other set $C_g$ besides $C_i$. If $t$ belongs
to two intervals, say $I_{q-1}$ and $I_q$  and $i$ is defined as
before, then $z$ cannot lie in $C_i$, since in this case any convex
set among those defining $C_i$
either has all its points with
first coordinate smaller than $t$ or all its points with 
first coordinate larger than $t$. 

It thus only remains to show that we can choose for every interval
$I_q$ a color $i$ with the required properties, making sure that no
fixed color is chosen more than $\lfloor (s-1)/2 \rfloor$ times
(as this way each set $C_i$ will be the union of at most $s$ convex 
sets). To do so we go over the intervals $I_q$ one by one, in an
arbitrary order (for example, from left to right). When dealing
with the interval $I_q$ after handling several previous ones, we
first note that since there are exactly 
$m=(\lfloor d/2 \rfloor+1)r/2$ 
points with first coordinate in $I_q$, there are at
least $r/2$ colors that appears at most 
$\lfloor d/2 \rfloor$ times in $I_q$. As so far we have selected the
colors $i$ for at most $p-1=\lfloor(s-1)/2 \rfloor r/2 -1$
intervals, there are less than $r/2$ colors $i$ that have already
been chosen $\lfloor(s-1)/2 \rfloor $ times. It follows that
there is at least one color $i$ that can be chosen for the interval
$I_q$, implying that the process terminates successfully. This
completes the proof of the theorem.
\end{proof}

\begin{remark}
    Note that for $d=1$, any hypergraph whose vertices are an arbitrary set of points on the line, and whose  hyperedges are the intersections of this set of points with union of $s$ intervals, has VC-dimension at most $2s$. 
    Indeed, it is easily verified that no set of size $2s+1$ is shattered.
    Assume to the contrary that there is a set $P=\{x_1,\ldots,x_{2s+1}\}$ that is shattered where the points are indexed by their increasing order along the line. Notice that every union of $s$-intervals that contains only the $s+1$ points with odd indices $\{x_1,x_3,\ldots, x_{2s+1}\}$ must have one of its intervals containing two consecutive such points and hence also a point with an even index, a contradiction.  Therefore, Theorem~\ref{t42} in dimension $1$ provides a hypergraph with VC-dimension bounded by $D=2s$ and an $r$-shattered set of size 
    at least 
    $$\frac{1}{4} \left \lfloor \frac{s-1}{2} \right \rfloor r^2= \frac{1}{4} \left \lfloor \frac{D-2}{4} \right \rfloor r^2.$$
    This establishes the claimed lower bound in Lemma~\ref{lem:generalized-shatter}
    
\end{remark}

\section{Concluding remarks and open problems}
\label{sec:conclusion}

We established extensions of Radon's Theorem and Tverberg's Theorem for unions of convex sets. The main tools in the proofs are upper bounds for the shatter functions of range spaces with a bounded VC-dimension as well as an extension of these results to $r$-VC-dimension. This extension, defined and studied here, is useful in the study of partitions with more than $2$ parts, which are the ones considered in the classical definition of the VC-dimension.

Our notion of $r$-VC-dimension may be of independent interest beyond the geometric problems studied here. It controls the growth of realizable $r$-partitions (a natural multiclass analogue of the usual shatter function), and thus provides a convenient complexity measure when one needs uniform bounds simultaneously over many parts. We expect that this parameter may be useful in other settings like the study of realizability of $r$-labelings or $r$-way classification rules in machine learning.

As mentioned in the introduction, already for $s_i=1$ for all $i$ the upper bound provided in our Theorem~\ref{thm:main-abstractspaces-tverberg} is $O(d r^2 \log (r+1))$. It improves the upper bound of $c(d)r^2 \log ^2 (r+1)$ by Bukh~\cite{Bukh2010} who proved it for the more general setting of (not necessarily separable) abstract convexity spaces. 
In~\cite{palvolgyi-radon} P\'alv\"olgyi provided an upper bound of the form $O(d^{d^{d^{\log d}}}r)$ which is linear in $r$ but super exponential in $d$.
It will be interesting to decide if one can get rid of the 
separability assumption and prove the bound in Theorem 
\ref{thm:main-abstractspaces-tverberg} for all
abstract convexity spaces.

While our upper bounds and the lower bounds
in \cite{ChenETAL2025}
for the functions $f(d,s,t)$ and $f_r(d,s_1,s_2, \ldots ,s_r)$ 
are not very far from each other, the problem of determining them 
precisely remains open for most values of the parameters. It 
will be interesting to close the gap in these bounds. 
 
\section*{Acknowledgements}
We thank Gil Kalai for fruitful discussions.

\end{document}